\theoremstyle{plain}
\newtheorem{theorem}{Theorem}
\newtheorem{proposition}[theorem]{Proposition}
\newtheorem{corollary}[theorem]{Corollary}
\theoremstyle{definition}
\newtheorem{problem}[theorem]{Problem}
\theoremstyle{remark}
\newtheorem{fact}[theorem]{Fact}
\newtheorem{remark}[theorem]{Remark}
\newtheorem{example}[theorem]{Example}
\begin{document}
\title[Lotz-Peck-Porta and Rosenthal's theorems]{Lotz-Peck-Porta and Rosenthal's theorems for spaces $C_p(X)$}
\subjclass[2010]{54C35, 54G12, 54H05, 46A03}
\keywords{scattered compact space,  $C_p$-space, Banach space}
\author{Jerzy K{\c{a}}kol}
\address{Faculty of Mathematics and Informatics. A. Mickiewicz University,
61-614 Pozna\'{n}}
\email{kakol@amu.edu.pl}
\author{Ond\v {r}ej Kurka}
\address{Institute of Mathematics, Czech Academy of Sciences, Prague, Czech Republic}
\email{kurka.ondrej@seznam.cz}
\author{Wies{\l}aw \'Sliwa}
\address{Faculty of Exact and Technical Sciences,
University of Rzesz\'ow, 35-310 Rzesz\'ow}
\email{sliwa@amu.edu.pl; wsliwa@ur.edu.pl}

\thanks{The authors thank Prof.~Arkady Leiderman and Prof.~Witold Marciszewski for their discussions and helpful comments, which allowed the use of Theorem 2 in Rosenthal's publication \cite{Rosenthal}. The second named author was supported by the Academy of Sciences of the Czech Republic (RVO 67985840).}

\begin{abstract}
For a Tychonoff space $X$ by $C_p(X)$  we denote the space $C(X)$ of continuous real valued functions on $X$ endowed with the pointwise topology. We prove that an infinite compact space $X$ is scattered if and only if every closed infinite-dimensional subspace in $C_p(X)$ contains a copy of $c_0$ (with the pointwise topology) which is complemented in the whole space $C_p(X)$. This provides a $C_p$-version of the  theorem of Lotz, Peck and Porta for Banach spaces $C(X)$ and $c_0$. Applications will be provided.  We prove   also a $C_p$-version of Rosenthal's theorem by showing that for an infinite compact $X$ the space $C_p(X)$ contains a closed copy of $c_{0}(\Gamma)$ (with the pointwise topology) for some uncountable set $\Gamma$ if and only if $X$ admits  an uncountable family of pairwise disjoint open subsets of $X$.
 Illustrating examples, additional supplementing $C_p$-theorems  and comments are included.
\end{abstract}
\maketitle

\section{Introduction}
For  a locally convex space (lcs) $E$  by $E'$ we denote its topological dual, especially $E'_{\beta}=(E',\beta(E',E))$ means  the strong dual of $E$.
For a Tychonoff space $X$ by $C_p(X)$  we denote the space $C(X)$ of continuous real valued functions on $X$ endowed with the pointwise topology. By $c_0$ we denote the classical Banach space of real sequences $x=(x_{n})$ converging to zero with the sup-norm topology.

In 1958 Pe{\l}czy\'nski and Semadeni  \cite[Main Therorem]{Pelczynski} proved a remarkable  theorem gathering several equivalent conditions characterizing scattered compact spaces. Among the others they showed that a compact space $X$ is scattered if and only if every closed infinite-dimensional closed subspace of $C(X)$ contains an isomorphic copy of the Banach sequence space $c_0$.  In  \cite[Theorem 11]{LPP} Lotz, Peck and  Porta extended this theorem, see also \cite{LPP-1} for several  concrete examples semi-embeddings between Banach spaces.
\begin{theorem}[Lotz--Peck--Porta]\label{PS} For an infinite compact space $X$ the following assertions are equivalent:
\begin{enumerate}
\item $X$ is scattered.
\item Any closed infinite-dimensional vector subspace  of the Banach space $C(X)$ contains a copy of the  Banach space $c_0$ which is complemented in $C(X)$.
\item  Any semi-embedding from  $C(X)$ into a Banach space is embedding.
\item If  $E$  is a Banach space and $T: C(X)\rightarrow E$  is an injective linear continuous map which is not
 an embedding, then there is a complemented subspace $G$ of $C(X)$ such that $G$ is
 isomorphic to $c_0$ and $T|G$ is compact.
\end{enumerate}
\end{theorem}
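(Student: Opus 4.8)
The plan is to prove the four conditions equivalent through the cycle $(1)\Rightarrow(2)\Rightarrow(1)$ together with $(1)\Rightarrow(4)\Rightarrow(3)\Rightarrow(1)$, isolating scatteredness as the source of the ``positive'' implications $(1)\Rightarrow(2)$ and $(1)\Rightarrow(4)$ and treating the reversals by contraposition. For the reversals I would use that a non-scattered compact $X$ carries a nonempty perfect set, which maps continuously onto $2^{\omega}$ and hence onto $[0,1]$; extending such a map by Tietze gives a continuous surjection $X\to[0,1]$, so $C[0,1]$ embeds isometrically into $C(X)$. For $(2)\Rightarrow(1)$ this already suffices: $C[0,1]$ contains an isometric copy of $\ell^{2}$ by Banach--Mazur universality, and a reflexive infinite-dimensional subspace contains no copy of $c_{0}$, so $(2)$ fails. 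For $(3)\Rightarrow(1)$ I would, from the same non-scattered hypothesis, manufacture a semi-embedding of $C(X)$ that is not an embedding, using a non-atomic Radon measure on $X$ (equivalently the reflexive, RNP subspaces now present) to build an injective operator that is not bounded below; checking that its image of the unit ball is closed is the technical point of this direction.

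For the core implication $(1)\Rightarrow(2)$, let $Y\subseteq C(X)$ be a closed infinite-dimensional subspace. Since $X$ is scattered, $C(X)$ is Asplund and in particular contains no isomorphic copy of $\ell^{1}$; hence by Rosenthal's $\ell^{1}$-theorem every bounded sequence in $Y$ has a weakly Cauchy subsequence. Because $Y$ is infinite-dimensional one may extract a seminormalized weakly null sequence in $Y$ (a normalized sequence with no norm-convergent subsequence exists by Riesz, and its weakly Cauchy differences do the job). I would then invoke the Pe\l czy\'nski--Semadeni analysis of $C(K)$ for scattered $K$: a normalized weakly null sequence in $C(X)$ admits a subsequence $(y_{n})$ equivalent to the unit vector basis of $c_{0}$, obtained by a gliding-hump argument making the $y_{n}$ almost disjointly supported near a sequence of points of $X$. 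This produces $G=\overline{\operatorname{span}}\{y_{n}\}\cong c_{0}$ inside $Y$.

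The delicate part is to complement $G$ in the whole space $C(X)$, which may be non-separable, so that Sobczyk's theorem does not apply directly. My plan is to refine the gliding-hump construction so that each $y_{n}$ is concentrated near an \emph{isolated} point $x_{n}$ of $X$ (scatteredness guarantees an abundance of isolated points, and one can arrange $(x_{n})$ to be a discrete sequence with a single limit point). The Hahn--Banach extensions $\mu_{n}\in C(X)^{*}$ of the coordinate functionals can then be chosen to form a bounded weak$^{*}$-null sequence --- essentially point masses at the $x_{n}$ corrected by small terms --- so that $f\mapsto(\langle\mu_{n},f\rangle)_{n}$ is a bounded map $C(X)\to c_{0}$; composing with the isomorphism $c_{0}\to G$ yields a bounded projection onto $G$. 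I expect this to be the main obstacle, since everything hinges on producing the copy of $c_{0}$ \emph{together with} a weak$^{*}$-null biorthogonal system living in the full dual.

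Finally, $(1)\Rightarrow(4)$ runs along the same lines: if $T\colon C(X)\to E$ is injective, continuous and not an embedding, then $T$ is not bounded below, so there is a normalized sequence $(f_{n})$ with $Tf_{n}\to0$; passing to weakly null differences and applying the extraction above gives a complemented $G\cong c_{0}$ on whose basis $T$ tends to zero, whence $T|_{G}$ is compact. The implication $(4)\Rightarrow(3)$ is where semi-embeddings enter: given a semi-embedding $T$ that is not an embedding, $(4)$ supplies a complemented $G\cong c_{0}$ with $T|_{G}$ compact, and I would contradict the closedness of $T(B_{C(X)})$. The mechanism is that on $c_{0}$ the image of the unit ball under a compact injective map is never closed --- the partial sums $\sum_{n\le N}y_{n}$ stay bounded while converging weak$^{*}$ to an element of $\ell^{\infty}\setminus c_{0}$, so their $T$-images (after weighting to make them summable) converge to a point that cannot arise as $Tf$ with $f\in B_{C(X)}$ --- and the complementation allows one to transport this failure of closedness to $C(X)$. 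I anticipate that this duality argument, paired with the complementation step, will be the hardest component of the whole proof.
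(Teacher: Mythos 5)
The first thing to say is that the paper itself does not prove Theorem \ref{PS}: it is quoted from Lotz--Peck--Porta \cite[Theorem 11]{LPP}, and the only related argument in the paper, Proposition \ref{0}, \emph{uses} that theorem as a black box. So your proposal has to stand on its own, and it has a fatal gap exactly at its core. The step you attribute to Pe{\l}czy\'nski--Semadeni --- that for scattered $X$ a normalized weakly null sequence in $C(X)$ admits a subsequence equivalent to the unit vector basis of $c_0$, extractable by a gliding hump making the functions almost disjointly supported --- is false, and both your $(1)\Rightarrow(2)$ and your $(1)\Rightarrow(4)$ rest on it. The Schreier example kills it: let $K=\{F\subset\mathbb{N}\ \mathrm{finite}:\ |F|\leq\min F\}\cup\{\emptyset\}$, a compact subset of $\{0,1\}^{\mathbb{N}}$ which is countable, hence scattered. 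The functions $f_n\in C(K)$ with $f_n(F)=1$ if $n\in F$ and $f_n(F)=0$ otherwise are normalized, and they are weakly null because on a scattered compact space every Radon measure is purely atomic, so bounded pointwise-null sequences are weakly null. Yet for \emph{every} subsequence $(f_{n_k})$ one has $\big\Vert \sum_{k=1}^{2m} f_{n_k}\big\Vert_{\infty}\geq m$, since $F=\{n_{m+1},\dots,n_{2m}\}$ satisfies $|F|=m<n_{m+1}=\min F$ and is therefore a point of $K$ at which the sum equals $m$. So no subsequence is equivalent to the $c_0$ basis, and no gliding hump can disjointify these functions. Any correct proof must \emph{construct} a special sequence rather than refine an arbitrary weakly null one; this is precisely why Proposition \ref{0} first invokes \cite[Theorem 12 (5)]{KLS} to get a sequence $(u_n)$ in $E$ that is pointwise finitely supported (for each $x$ only finitely many $n$ with $u_n(x)\neq 0$) and only then produces the $c_0$-sequence inside the closed spans of its tails. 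Your complementation idea --- coordinate functionals realized as point masses plus small corrections --- is essentially the right one (compare the functionals $g^*_l$ built from $\delta_{y_l}-\delta_y$ in Proposition \ref{0}), but it cannot get off the ground without that pointwise-finiteness, which your extraction scheme does not and cannot supply.

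The two ``negative'' directions are also thinner than you acknowledge. For $(3)\Rightarrow(1)$ you give no construction at all: one must actually exhibit, for non-scattered $X$, a semi-embedding of $C(X)$ that is not an embedding, and the natural candidate (integration against a non-atomic $\mu$, plus weighted point evaluations on a dense set to force injectivity) is \emph{not} a semi-embedding --- on $[0,1]$ the functions $x^j$ have images converging to a point outside the image of the ball, so closedness genuinely fails; this is where \cite{LPP} do real work, not a routine verification. For $(4)\Rightarrow(3)$, your claim that a compact injective operator on $c_0$ never maps the closed unit ball onto a closed set is correct (a closed, relatively norm-compact convex set is compact, and by Krein--Milman plus injectivity its extreme points would pull back to extreme points of $B_{c_0}$, of which there are none), but the ``transport'' to $C(X)$ is a real gap: the semi-embedding hypothesis gives closedness of $T(B_{C(X)})$, whereas $T(B_G)=T(B_{C(X)})\cap T(G)$ with $T(G)$ not closed; and the weak$^*$ argument you sketch (partial sums converging weak$^*$ to $\xi\in\ell^{\infty}\setminus c_0$ with $T^{**}\xi=Tf$) contradicts injectivity of $T^{**}$, not of $T$. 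Closing this requires the copy of $c_0$ to carry extra structure (e.g.\ disjointly supported, hence isometric, together with the explicit projection), which again points back to the constructive step that is missing from your plan.
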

The term semi-embedding denotes an injective continuous linear map  from one
 Banach space into another, which maps the closed unit ball of the domain onto a
 closed set. An embedding of a Banach space into
 another is a continuous linear map  which is an isomorphism onto a closed subspace of its
 codomain.

 If $E$ and $F$ are lcs  and $T:E\rightarrow F$ is a continuous linear map, we shall say  that  $T$ is \emph{polar} if there exists in $E$  a base $\mathcal{U}$ of closed neighbourhoods of zero such that $T(U)$ is closed in $F$ for each $U\in\mathcal{U}$. If additionally $T$ is injective, we  say that $T$ is \emph{semi-embedding}.

In \cite[Theorem 6]{Kakol-Kurka} we proved another alternative characterization of compact scattered spaces as follows:
\begin{theorem}[K\c akol--Kurka]\label{Main2}
Let $X$ be an infinite compact space.  The following assertions are equivalent:
\begin{enumerate}
\item $X$ is scattered.
\item There is no infinite-dimensional closed $\sigma$-compact vector subspace of $C_p(X)$.
\item There is no infinite-dimensional vector subspace $F$ of $C_p(X)$ admitting  a fundamental sequence of bounded sets.
\end{enumerate}
\end{theorem}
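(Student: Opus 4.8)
The plan is to prove the two substantial implications $\neg(1)\Rightarrow\neg(2)\wedge\neg(3)$ (a construction) and $(1)\Rightarrow(2)\wedge(3)$ (an obstruction), using as the Banach-space input the $c_0$-saturation of $C(X)$ for scattered $X$ recorded in Theorem \ref{PS}, together with Grothendieck's theorem (for compact $X$ a norm-bounded subset of $C(X)$ is relatively pointwise compact iff relatively weakly compact, and the two topologies agree on such sets) and the angelicity of $C_p(X)$.

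For the construction $\neg(1)\Rightarrow\neg(2),\neg(3)$, note that a non-scattered $X$ carries a nonempty perfect set, hence admits a continuous surjection onto $[0,1]$ and, composing with a Peano (Hahn--Mazurkiewicz) surjection $[0,1]\twoheadrightarrow Q$, onto the Hilbert cube $Q\cong(B_{\ell_2},w)$. The dual map $\phi^{*}:C_p(Q)\to C_p(X)$, $g\mapsto g\circ\phi$, is a linear homeomorphism onto the closed subspace of functions that are constant on the fibres of $\phi$. Inside $C_p(Q)$ I take the canonical copy of $\ell_2$, namely $Z=\{\hat a:a\in\ell_2\}$ with $\hat a(x)=\langle a,x\rangle$, which is a linear isometry of $\ell_2$ into $C(Q)$. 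Because the evaluation points of $Q$ fill the whole ball $B_{\ell_2}$, pointwise convergence on $Q$ of a sequence $\hat{a_j}$ is convergence of $\langle a_j,\cdot\rangle$ on all of $B_{\ell_2}$; Banach--Steinhaus then forces $\sup_j\|a_j\|<\infty$ and identifies any continuous pointwise limit as some $\hat a$, so $Z$ is closed in $C_p(Q)$. Reflexivity makes $B_Z$ weakly compact, and on $Q$ the pointwise topology restricted to $B_Z$ is exactly the weak topology of $\ell_2$; hence each $nB_Z$ is pointwise compact, $Z=\bigcup_n nB_Z$ is $\sigma$-compact, and $(nB_Z)_n$ is a fundamental sequence of bounded sets of $Z$ (a pointwise-bounded subset of $Z$ is weakly, hence norm, bounded). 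Transporting $Z$ by the closed embedding $\phi^{*}$ yields an infinite-dimensional closed subspace of $C_p(X)$ witnessing both $\neg(2)$ and $\neg(3)$.

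For the obstruction $(1)\Rightarrow(2),(3)$, assume $X$ is scattered; by Theorem \ref{PS} the Banach space $C(X)$ is $c_0$-saturated, so it contains no infinite-dimensional reflexive subspace. The aim is to show that an infinite-dimensional closed subspace $F\subseteq C_p(X)$ which is $\sigma$-compact, or merely carries a fundamental sequence of bounded sets, would produce such a reflexive subspace. The first move passes from $C_p$-largeness to norm-boundedness: each norm ball $mB_{C(X)}=\{\|f\|\le m\}$ is pointwise closed, so if $F=\bigcup_n K_n$ with $K_n$ pointwise compact, then $K_n=\bigcup_m(K_n\cap mB_{C(X)})$ writes the compact space $K_n$ as a countable union of closed sets; Baire's theorem in $K_n$ gives a pointwise-open, norm-bounded piece whose closure $W$ is a norm-bounded pointwise-compact, hence (Grothendieck) weakly compact, subset of $C(X)$. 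Using the vector structure and the closedness of $F$ one then arranges $W$ to be infinite-dimensional and to generate inside $F$ a subspace whose bounded sets are weakly compact, i.e.\ a reflexive Banach subspace of $C(X)$, the desired contradiction. The argument from a fundamental sequence of bounded sets $(B_n)$ is analogous, after first upgrading the $B_n$ to relatively pointwise compact sets so that condition $(3)$ feeds into the same $\sigma$-compact picture before $W$ is extracted.

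The step I expect to be the main obstacle is precisely this extraction of a reflexive subspace in the obstruction direction, where closedness of $F$ is both essential and delicate. For the scattered space $X=\omega+1$ the subspace $c_0\subseteq C_p(X)$ is pointwise closed yet is neither $\sigma$-compact nor equipped with a fundamental sequence of bounded sets, while its natural $\sigma$-compact span $\bigcup_n\{x:\,|x_k|\le n/k\}$ fails to be pointwise closed; these examples show that neither closedness nor $\sigma$-compactness alone suffices. One must therefore use the hypotheses $(2)$ or $(3)$ together with closedness to guarantee that the pointwise-bounded subsets of $F$ are relatively pointwise compact --- equivalently, that the weakly compact $W$ generates a space on which $C_p$-boundedness coincides with weak compactness --- thereby ruling out the merely $c_0$-like behaviour that is compatible with scatteredness. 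Converting this boundedness-to-compactness passage into genuine reflexivity, via Grothendieck's theorem and weak sequential completeness, is the technical heart of the argument.
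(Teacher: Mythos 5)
Your obstruction direction $(1)\Rightarrow(2)\wedge(3)$ is not a proof: the decisive step --- passing from the Baire-extracted, norm-bounded, pointwise compact (hence, by Grothendieck, weakly compact) set $W$ to an infinite-dimensional reflexive subspace of $C(X)$ --- is exactly the point you leave as ``one then arranges'', and the natural reading of that step is false. The closed linear span of a weakly compact, even norm-compact, subset of $C(X)$ need not be reflexive: in $C(\omega+1)$ the norm-compact set $\{2^{-n}e_{n}:n\in\mathbb{N}\}\cup\{0\}$ has closed linear span isomorphic to $c_{0}$. Reflexivity of a subspace requires its closed unit ball to be weakly compact, and nothing in your Baire/Grothendieck extraction produces an absolutely convex set that is weakly compact \emph{and} absorbing in an infinite-dimensional subspace of $F$; your own example $c_{0}\subset C_p(\omega+1)$ (pointwise closed, yet neither $\sigma$-compact nor possessing a fundamental sequence of bounded sets) identifies precisely the behaviour that must be excluded, but the proposal never excludes it. So the implication that actually characterizes scatteredness is missing. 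For comparison: the paper does not reprove Theorem \ref{Main2} (it is quoted from \cite{Kakol-Kurka}), and the obstruction argument the paper does present --- the short proof of $(1)\Leftrightarrow(3)$ of Theorem \ref{3} --- runs along a different line that avoids reflexive subspaces altogether: $X$ scattered implies $C_p(X)$ is Fr\'echet-Urysohn \cite[Theorem III.1.2]{Ark}, hence every subspace is Fr\'echet-Urysohn and therefore bornological \cite[Lemma 14.4.3]{KKPS}, and bornologicity is then played against the largeness hypothesis (for a copy of $E_w$ it forces the weak and norm topologies of $E$ to coincide). Your $c_{0}$-saturation input from Theorem \ref{PS} is a reasonable intuition, but your implementation of it does not close.

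Your construction direction is essentially the paper's Proposition \ref{1} specialized to $E=\ell_{2}$ (non-scattered $X$ maps onto $[0,1]$, Hahn--Mazurkiewicz onto the Peano continuum $Q=(B_{\ell_{2}},w)$, dual closed embedding $C_p(Q)\hookrightarrow C_p(X)$, canonical copy of $(\ell_{2})_{w}$ inside $C_p(Q)$), and it is sound except for the proof that $Z$ is closed in $C_p(Q)$. You verify only \emph{sequential} closedness (sequences $\hat a_{j}$ plus Banach--Steinhaus); but $C_p(Q)$ is not Fr\'echet-Urysohn, indeed not sequential, precisely because $Q$ is not scattered, so sequential closedness does not imply closedness, and for nets the uniform boundedness principle is unavailable (a pointwise convergent net of functionals need not be norm bounded). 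The gap is fixable exactly as in the paper's Proposition \ref{1}: given a net $\hat a_{\alpha}\to h$ in $C_p(Q)$, the limit defines a linear functional $\hat h$ on $\ell_{2}$ whose restriction to $B_{\ell_{2}}$ equals $h$, hence is weakly continuous on the ball, hence norm continuous there, so $\hat h\in\ell_{2}^{*}=\ell_{2}$ and $h\in Z$. With that repair this half of your argument coincides with the paper's; the other half remains a genuine gap.
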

In the paper we show the following $C_p$-variant of Theorem \ref{PS} which  will also provide a more comprehensive look at the Lotz, Peck and Porta result.
\begin{theorem}\label{MAIN}
For an infinite compact space $X$  the following assertions are equivalent:
\begin{enumerate}
\item $X$ is scattered.
\item Every closed infinite-dimensional vector subspace of $C_p(X)$ contains a copy of $(c_0)_p$ which is complemented in the space $C_p(X)$, where $(c_0)_p=\{(x_{n})\in\mathbb{R}^{\mathbb{N}}: x_{n}\rightarrow 0\}$ is endowed with the topology of $\mathbb{R}^{\mathbb{N}}$.
\end{enumerate}
\end{theorem}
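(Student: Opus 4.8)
The plan is to prove the two implications by completely different methods: $(2)\Rightarrow(1)$ will fall out softly from Theorem~\ref{Main2}, whereas $(1)\Rightarrow(2)$ requires an explicit construction, and this is where essentially all the work lies.

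For $(2)\Rightarrow(1)$ I argue contrapositively. If $X$ is not scattered, then by Theorem~\ref{Main2} there is an infinite-dimensional closed $\sigma$-compact vector subspace $F$ of $C_p(X)$. Granting~(2) and applying it to this particular $F$, we would obtain a copy $Z$ of $(c_0)_p$ inside $F$ together with a continuous projection $P$ of $C_p(X)$ onto $Z$. Since $Z\subseteq F$ and $P|_Z=\mathrm{id}_Z$, we have $P(F)=Z$, so $Z$ is a continuous image of the $\sigma$-compact space $F$ and is therefore itself $\sigma$-compact. This contradicts the fact that $(c_0)_p$ is \emph{not} $\sigma$-compact: intrinsic $\sigma$-compactness would force $(c_0)_p$ to be an $F_\sigma$ subset of $\mathbb{R}^{\mathbb{N}}$ (a countable union of compacta, each closed in $\mathbb{R}^{\mathbb{N}}$), whereas $(c_0)_p$ is a classical $F_{\sigma\delta}$-but-not-$F_\sigma$ set. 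Hence (2) fails, as required.

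For $(1)\Rightarrow(2)$ let $F$ be a closed infinite-dimensional subspace of $C_p(X)$. Because the norm topology refines the pointwise one, $F$ is automatically \emph{norm}-closed, hence an infinite-dimensional closed subspace of the Banach space $C(X)$. As $X$ is scattered, Theorem~\ref{PS} (or Pe{\l}czy\'nski--Semadeni) applied inside $F$ yields a normalized sequence $(g_n)\subseteq F$ equivalent to the unit vector basis of the Banach space $c_0$; being $c_0$-basic it is weakly null, so $g_n\to 0$ pointwise on $X$. The decisive reduction is to pass to a \emph{countable} compact space: the continuous map $q\colon X\to\mathbb{R}^{\mathbb{N}}$, $q(x)=(g_n(x))_n$, has compact image $\tilde K=q(X)$ which is metrizable (a subset of $\mathbb{R}^{\mathbb{N}}$) and scattered (a continuous image of a compact scattered space is scattered, since all measures on it remain atomic), hence countable. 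Writing $g_n=\tilde g_n\circ q$, every point-evaluation on the $c_0$-span $G=\overline{\mathrm{span}}^{\,\|\cdot\|}(g_n)\subseteq F$ factors through a point of $\tilde K$, so the topology that $C_p(X)$ induces on $G$ is governed entirely by the countable compact space $\tilde K$.

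On $\tilde K$ the construction becomes tractable. Enumerating $\tilde K=\{t_1,t_2,\dots\}$, I build a normalized block sequence $(f_k)$ of $(\tilde g_n)$ (so $f_k\in G\subseteq F$ and $(f_k)$ is again $c_0$-basic), isolated points $y_k\in\tilde K$, and a limit point $y_\infty$, by induction so that each $f_k$ vanishes at $t_1,\dots,t_{k-1}$ and at $y_1,\dots,y_{k-1}$, while $y_k$ is chosen off the supports of $f_1,\dots,f_{k-1}$ with $f_k(y_k)=1$, and finally $y_k\to y_\infty$. The vanishing at all the $t_j$ gives the point-finiteness property $(\star)$: every point of $X$ lies in the support of only finitely many $f_k$. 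Then $(\star)$ makes $\sum_k a_k f_k\mapsto(a_k)_k$ a continuous bijection of $Z=\overline{\mathrm{span}}^{\,C_p}(f_k)$ onto $(c_0)_p$ (each evaluation on $Z$ is finitely supported, hence $\mathbb{R}^{\mathbb{N}}$-continuous), while the biorthogonality $f_k(y_j)=\delta_{kj}$ realizes the coordinate functionals as finite combinations of point-evaluations, hence as $C_p$-continuous functionals on $Z$; together these give $Z\cong(c_0)_p$. Lastly the finitely supported measures $\mu_k=\delta_{y_k}-\delta_{y_\infty}$ satisfy $\mu_k(f_m)=\delta_{km}$ (using $f_m(y_\infty)=\lim_j f_m(y_j)=0$) and $\mu_k(g)=g(y_k)-g(y_\infty)\to 0$ for every $g\in C(X)$, so by $(\star)$ the map $P(g)=\sum_k\mu_k(g)\,f_k$ is a well-defined $C_p$-continuous projection of $C_p(X)$ onto $Z\subseteq F$, completing the implication.

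The hard part is the inductive construction of the last paragraph, and precisely the step forcing \emph{full} biorthogonality $f_k(y_j)=\delta_{kj}$: at stage $k$ one must locate an isolated point $y_k$ lying \emph{outside} the union of the finitely many supports of $f_1,\dots,f_{k-1}$ at which the freshly chosen block $f_k$ — itself constrained to lie in the tail span of $(\tilde g_n)$ and to vanish at the prescribed earlier points — is nonzero. Ensuring that such fresh isolated points remain available, so that the biorthogonal functionals can be taken to be the \emph{bounded} measures $\delta_{y_k}-\delta_{y_\infty}$ rather than the possibly unbounded functionals coming from inverting a merely triangular system, is the central technical obstacle; it is here that the countability of $\tilde K$ together with the freedom to form blocks of $(\tilde g_n)$ inside $F$ must be fully exploited.
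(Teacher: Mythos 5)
Your direction $(2)\Rightarrow(1)$ is correct, and it takes a genuinely different route from the paper's: where the paper (Proposition~\ref{1} and Corollary~\ref{2}) embeds $E_w$, $E$ separable reflexive, as a closed subspace of $C_p(X)$ and rules out copies of $(c_{00})_p$ via fundamental sequences of bounded sets, you quote Theorem~\ref{Main2}, use the projection to push $\sigma$-compactness of $F$ onto the copy $Z$ of $(c_0)_p$ (this use of complementation is needed and correctly handled), and then invoke the classical fact that $c_0$ is not an $F_\sigma$ subset of $\mathbb{R}^{\mathbb{N}}$. That argument is sound.

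The direction $(1)\Rightarrow(2)$ has a genuine gap, and it sits exactly where you flag it. Your plan demands \emph{exact} two-sided biorthogonality: at stage $k$ a block $f_k$ of the tail span, vanishing at prescribed points, together with an isolated point $y_k$ lying \emph{outside the closed supports} of $f_1,\dots,f_{k-1}$, at which $f_k$ is large. You never prove such a choice is possible, and it is not a routine technicality: the sequence $(g_n)$ is handed to you by Lotz--Peck--Porta, you do not control the supports of its blocks, and a single block can perfectly well have support equal to all of $\tilde K$ (on $\{0\}\cup\{1/m\}$ take a block acting as $1/m\mapsto 1/m$), in which case no admissible $y_{k+1}$ exists at the next stage; whether blocks whose supports leave room for fresh isolated points can always be chosen is precisely the content of the theorem, not a detail. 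The paper's Proposition~\ref{0} exists because one can avoid ever needing this: it arranges exact vanishing in one direction only, $f_l(x_j)=0$ for $j<l$ (cheap, by point-finiteness), and merely \emph{approximate} smallness in the other, $|f_j(x_l)|<2^{-(j+l)}a/3$ for $j<l$ (cheap, by continuity along a convergent sequence of points), and then inverts the resulting almost-triangular system recursively; the geometric smallness keeps the biorthogonal functionals $g_l^*$ uniformly bounded by $3/a$ and weak$^*$-null. In other words, the ``possibly unbounded functionals coming from inverting a merely triangular system,'' which you reject, are exactly what the paper tames, while the stronger exact requirement you substitute for them is the unproven (and dubious) step. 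A smaller but real defect of the same step: you cannot have both $\|f_k\|=1$ and $f_k(y_k)=1$; you need $y_k$ to be a near-peak point of $f_k$, which further constrains a choice that is already over-constrained.

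There is a second gap, independent of the first. Even granting the induction on $\tilde K$, your projection lives on the wrong space: the functionals $\mu_k=\delta_{y_k}-\delta_{y_\infty}$ act on $C(\tilde K)$, and the assertion ``$\mu_k(g)=g(y_k)-g(y_\infty)\to 0$ for every $g\in C(X)$'' does not typecheck, since $y_k\in\tilde K$ while $g$ is defined on $X$. What your construction actually yields is a projection of the closed subspace $q^{*}C_p(\tilde K)$ of $C_p(X)$ onto $Z$, and that subspace need not be complemented in $C_p(X)$, so statement (2) (complementation in the whole of $C_p(X)$) is not obtained. To repair this you must lift: choose $x_k\in q^{-1}(y_k)$ and make $\delta_{x_k}-\delta_{x^{*}}$ pointwise-null on all of $C(X)$, which requires a subsequence with $x_{k_j}\to x^{*}$ \emph{in} $X$ (then $q(x^{*})=y_\infty$ and biorthogonality survives). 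That convergent subsequence is available because compact scattered spaces are sequentially compact --- this is the role of \cite[Lemma 9]{LPP} in the paper, and the reason the paper keeps its points in $X$ from the start --- but your argument never invokes it. Your reduction to the countable quotient $\tilde K$, which is an elegant substitute for \cite[Theorem 12 (5)]{KLS} as a source of point-finiteness, discards exactly the information needed for the complementing functionals, and must be supplemented by this lifting step to give the theorem as stated.
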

Note  $(c_0)_p \approx C_p(S)$, where $S=\{n^{-1}:n\in\mathbb{N}\}\cup \{0\}$.
\begin{remark}
If $X$ is infinite compact  and scattered, our Theorem \ref{MAIN}  provides a fulfilling perspective on the Lotz, Peck, Porta theorem. Indeed, if $E$ is any infinite-dimensional closed vector subspace of  $C(X)$, Theorem \ref{MAIN}  and the classical closed graph theorem yield   that $E$ contains not only a copy of the Banach space $c_0$ complemented  in the Banach space $C(X)$ (with a continuous projection, say
$T:C(X)\rightarrow c_0$) but simultaneously   contains a copy  of the space $(c_0)_p$  complemented  in $C_p(X)$  with the same continuous projection $T: C_p(X)\rightarrow (c_0)_p$, see Proposition \ref{0}.
\end{remark}
On the other hand, we prove also the following $C_p$-version of Rosenthal's \cite[Theorem 4.5]{Rosenthal}.
\begin{theorem}\label{MAIN3} For a compact space $X$ the following  are equivalent.
\begin{enumerate}
\item  There exists an uncountable family of pairwise disjoint open subsets of $X$.

\item  $C_p(X)$ contains a copy of $(c_{00}(\Gamma))_p$ for some uncountable set $\Gamma$.

\item $C_p(X)$ contains a closed copy of $(c_{0}(\Gamma))_p$ for some uncountable set $\Gamma$.

\item $C(X)$ contains a copy of $c_{0}(\Gamma)$ for some uncountable set $\Gamma$.
\item $C_p(X)$ contains a compact non-separable subset.
\end{enumerate}
\end{theorem}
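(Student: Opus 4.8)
The plan is to derive every condition from $(1)$, handling the four ``constructive'' implications $(1)\Rightarrow(3)\Rightarrow(2)\Rightarrow(5)$ and $(1)\Rightarrow(4)$ by explicit hand, and then closing the cycle through the two ``hard'' implications $(4)\Rightarrow(1)$ and $(5)\Rightarrow(1)$, both of which will ultimately rest on Rosenthal's theorem.

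For $(1)\Rightarrow(3)$ I would start from an uncountable family $\{U_\gamma\}_{\gamma\in\Gamma}$ of pairwise disjoint nonempty open subsets of $X$, choose $x_\gamma\in U_\gamma$, and use complete regularity to pick $f_\gamma\in C(X)$ with $0\le f_\gamma\le 1$, $f_\gamma(x_\gamma)=1$ and $f_\gamma\equiv 0$ on $X\setminus U_\gamma$. The map $T\colon (c_0(\Gamma))_p\to C_p(X)$, $T(a)=\sum_\gamma a_\gamma f_\gamma$, is well defined because the supports are disjoint and $a\in c_0(\Gamma)$: at a point $x\in U_\delta$ only the $\delta$-th summand survives, at a point outside $\bigcup_\gamma U_\gamma$ every summand vanishes, and a short $c_0$-argument shows the sum is continuous. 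Since $T(a)(x_\gamma)=a_\gamma$, the map $T$ is a linear homeomorphism onto its image, with inverse $g\mapsto (g(x_\gamma))_\gamma$, and the same evaluation identity shows the image is closed: if $g$ lies in the pointwise closure of the image, then $a_\gamma:=g(x_\gamma)$ must lie in $c_0(\Gamma)$, for otherwise some level set $\{\gamma:|g(x_\gamma)|\ge\varepsilon\}$ would be infinite, its points $x_\gamma$ would cluster at some $x_0\notin\bigcup_\gamma U_\gamma$ by disjointness, and continuity of $g$ would force $|g(x_0)|\ge\varepsilon$ while $g(x_0)=0$. This yields a closed copy of $(c_0(\Gamma))_p$, hence $(3)$; the implication $(3)\Rightarrow(2)$ is immediate, since $(c_{00}(\Gamma))_p$ is a subspace of $(c_0(\Gamma))_p$. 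For $(1)\Rightarrow(4)$ the same $f_\gamma$ give a linear isometry $c_0(\Gamma)\to C(X)$ for the sup-norm, whose image is automatically closed.

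For $(2)\Rightarrow(5)$ I would observe that in $(c_{00}(\Gamma))_p$ the set $A=\{e_\gamma:\gamma\in\Gamma\}\cup\{0\}$ is homeomorphic to the one-point compactification of the discrete space $\Gamma$: every basic pointwise neighbourhood of $0$ contains all but finitely many $e_\gamma$. Thus $A$ is compact and, having uncountably many isolated points, non-separable, so its image under the embedding from $(2)$ is a non-separable compact subset of $C_p(X)$.

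The genuine difficulty lies in closing the loop, i.e.\ the two implications into $(1)$. Here $(4)\Rightarrow(1)$ is exactly Rosenthal's theorem \cite[Theorem 4.5]{Rosenthal}. For $(5)\Rightarrow(1)$ the plan is first to reduce to a uniformly bounded compact set: writing a non-separable compact $K\subseteq C_p(X)$ as the countable union $K=\bigcup_n(K\cap\{f:\|f\|_\infty\le n\})$ of closed subsets, one of them, say $K_n$, must be non-separable, because a countable union of separable subspaces is separable. By Grothendieck's theorem a uniformly bounded, pointwise compact subset of $C(X)$ is weakly compact, so $K_n$ is a non-separable weakly compact subset of the Banach space $C(X)$, and Rosenthal's disjointness results (\cite[Theorems 2 and 4.5]{Rosenthal}) then force an uncountable family of pairwise disjoint open subsets of $X$. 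I expect this final extraction to be the main obstacle: it cannot be reached by soft topology alone, since the elementary approach produces only an uncountable \emph{point-finite} family of open sets, from which an uncountable pairwise disjoint subfamily need not be extractable in a ccc space (point-finite cellularity may consistently exceed cellularity), so the finer measure-theoretic combinatorics of Rosenthal's theorem is essential.
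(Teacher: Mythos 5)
Your argument is correct, and it closes the cycle of implications differently than the paper does. The shared parts: your $(1)\Rightarrow(3)$ is essentially the paper's construction (disjointly supported bump functions $f_\gamma$, the evaluation map $g\mapsto(g(x_\gamma))_\gamma$ as inverse, and a compactness argument for closedness of the image --- the paper covers the compact level set $\{x:|h(x)|\ge\varepsilon\}$ by the disjoint open sets, you take an $\omega$-accumulation point of the $x_\gamma$'s; same idea), and your $(5)\Rightarrow(1)$ (truncate the compact set to a uniformly bounded non-separable piece, apply Grothendieck's theorem, then Rosenthal) is exactly the paper's. The genuine difference is how one gets from $(2)$ back to $(1)$: the paper proves $(2)\Rightarrow(1)$ directly, by showing that the supports $f_\gamma^{-1}(\mathbb{R}\setminus\{0\})$ of the images of the unit vectors form an uncountable \emph{point-finite} family of open sets and then invoking \cite[Lemma 4.2]{Rosenthal} to pass from point-finiteness to the failure of ccc --- precisely the passage you correctly flag as inaccessible to soft topology. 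You sidestep that lemma entirely via $(2)\Rightarrow(5)\Rightarrow(1)$, where $(2)\Rightarrow(5)$ is your elementary observation that $\{e_\gamma:\gamma\in\Gamma\}\cup\{0\}$ is a copy of the one-point compactification of discrete $\Gamma$ sitting inside $(c_{00}(\Gamma))_p$, hence a compact non-separable set. Similarly, you replace the paper's citations of \cite[Theorem 4.5]{Rosenthal} for $(1)\Rightarrow(4)$ and $(1)\Rightarrow(5)$ by direct constructions (an isometric copy of $c_0(\Gamma)$, and the elementary chain $(1)\Rightarrow(3)\Rightarrow(2)\Rightarrow(5)$). The net effect is a trade-off: the paper gets a quotable direct implication $(2)\Rightarrow(1)$ and shorter text by citation, but uses Rosenthal in four places (Lemma 4.2 once, Theorem 4.5 three times); your version is more self-contained and invokes Rosenthal only where it is genuinely unavoidable, namely for the two implications $(4)\Rightarrow(1)$ and $(5)\Rightarrow(1)$ entering $(1)$ from Banach-space-theoretic hypotheses.
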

Last Theorem \ref{MAIN3} yields  the following
\begin{corollary}\label{MAIN2}
A compact scattered space $X$ is not separable if and only if $C_p(X)$ contains a [closed] copy of $(c_0(\Gamma))_p$  for an uncountable set  $\Gamma$.
\end{corollary}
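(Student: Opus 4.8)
The plan is to reduce the statement to a purely topological characterization by invoking Theorem \ref{MAIN3}, and then to settle that characterization using the structure of scattered spaces. By the equivalences $(1)\Leftrightarrow(2)\Leftrightarrow(3)$ of Theorem \ref{MAIN3}, the assertion that $C_p(X)$ contains a copy of $(c_0(\Gamma))_p$ for some uncountable $\Gamma$ and the assertion that it contains a \emph{closed} such copy are one and the same, and both are equivalent to the existence of an uncountable family of pairwise disjoint open subsets of $X$. This is exactly what justifies the bracketed ``[closed]'', and it shows that it suffices to prove, for a compact scattered $X$, that $X$ is non-separable if and only if it carries an uncountable family of pairwise disjoint open sets.

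The direction from the cellularity condition to non-separability is general and requires neither compactness nor scatteredness: if $\{U_i\}_{i\in I}$ is a family of pairwise disjoint nonempty open sets and $D$ is a countable dense set, then selecting a point of $D$ inside each $U_i$ yields an injection of $I$ into $D$, so $I$ is countable. Contrapositively, an uncountable disjoint family of open sets rules out any countable dense subset, so $X$ is non-separable.

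For the converse I would exploit the fact that in a scattered space the set $I(X)$ of isolated points is dense. Indeed, any nonempty open $U\subseteq X$ is itself scattered, hence possesses a point $x$ isolated in the subspace $U$; writing $\{x\}=V\cap U$ with $V$ open in $X$ and using that $U$ is open shows $\{x\}$ is open in $X$, so $x\in U$ is isolated in $X$. Now assume $X$ is non-separable. If $I(X)$ were countable, it would be a countable dense subset, a contradiction; hence $I(X)$ is uncountable, and the open singletons $\{x\}$ with $x\in I(X)$ form the required uncountable family of pairwise disjoint open sets.

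The argument is short once Theorem \ref{MAIN3} is in hand, and I expect the only delicate point---the conceptual core of the proof---to be the density of isolated points in a scattered space, together with the observation that non-separability upgrades ``dense'' to ``uncountable'' for this particular dense set. Compactness enters solely through the hypotheses of Theorem \ref{MAIN3}; the scattered-cellularity equivalence established here is purely topological.
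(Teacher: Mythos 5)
Your proposal is correct and follows essentially the same route as the paper: the paper's proof likewise reduces everything to Theorem \ref{MAIN3} and then observes that for a non-separable scattered compact space the open singletons at isolated points form an uncountable pairwise disjoint family. You have merely made explicit the steps the paper leaves implicit (density of isolated points in a scattered space, and the standard fact that an uncountable cellular family precludes separability).
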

Using results from  \cite[Example 2.16]{Dow} and \cite[Proposition 4.2]{Galego} and Corollary  \ref{MAIN2} we have however that
\begin{example}\label{exam7}
There exists a compact scattered non-separable $X$ such that $C_p(X)$ contains a closed copy of $(c_0(\aleph_1))_p$ not complemented in $C_p(X)$ but $(c_0(\aleph_1))_p$ contains  a copy of $(c_0)_p$ complemented in $C_p(X)$.
\end{example}
Theorem \ref{PS}(3) and Theorem \ref{MAIN}   may suggest the following
\begin{problem}\label{problem1}\label{U}
Describe suitable  Tychonoff infinite spaces $X$ and those lcs $E$ such that every   semi-embedding  from $C_p(X)$ into  $E$ is  embedding, i.e. an isomorphism onto the range.
\end{problem}
However, it seems  to be unclear how to construct semi-embeddings of $C_p(X)$ into $C_p(Y)$ which are  not  embeddings. For special $X$ and $Y$ and injective maps $T$ we have the following simple and easily seen
\begin{example}\label{example1}
If $X$ is an infinite Tychonoff space and $Y$ is its dense proper subspace, the restriction map $T: C_p(X)\rightarrow C_p(Y)$ is injective but not semi-embedding.
\end{example}
On the other hand, in \cite{LLP} Leiderman, Levin and Pestov showed that in general, linear continuous surjections
of $C_p$-spaces fail to be open. In fact they proved \cite[Theorem 1.8]{LLP} that there  exists a linear continuous surjection of $C_p[0, 1]$ onto itself that is
not open.
Recall also that some natural restrictions  on spaces $E$ in  Problem \ref{problem1} have been already noticed in \cite{LLP} and \cite{Kakol-Leiderman2}:

(i) Let $X$ be a Tychonoff space and $E$ be an infinite-dimensional  normed space. Then there exists no  sequentially continuous linear surjection from $C_p(X)$ onto $E_w$.
 (ii) If $E$ is an infinite-dimensional  metrizable lcs  and $T: C_p(X)\rightarrow E_w$ a sequentially continuous linear surjection, then the completion of $E$ is isomorphic to $\mathbb{R}^\mathbb{N}$. Here $E_w$ means $E$ endowed  with its weak topology. (iii) For cases $E=C_p(Y)$ we refer also \cite{LLP}.

A more specific version of  Problem \ref{problem1} might be also the following
\begin{problem}\label{problem3}
Let $X$ be an infinite compact space. Is it true that each semi-embedding $T:C_p(X)\rightarrow C_p(Y)$ is an embedding for any  compact space $Y$ if and only if $X$ is scattered.
\end{problem}
Note that if $T:C_p(X)\rightarrow C_p(Y)$ is  embedding  with  both $X$, $Y$  compact and $Y$ is scattered, then $X$ is scattered; this follows from \cite[Theorem III.1.2]{Ark}.  Clearly, the converse implication fails in general.
Having in mind  Problem \ref{problem3}  we prove   the following special case; this will follow from our Corollary  \ref{pro1}.
\begin{proposition}\label{pro3}
Let $X$ and  $Y$ be  compact spaces  and  $Y$  be scattered.
\begin{enumerate}
\item If $X$ is Eberlein compact, a  semi-embedding $T: C_p(X)\rightarrow C_p(Y)$ is embedding if and only if $X$ is scattered.
\item If $X$ is infinite and scattered, there exists a continuous linear injective surjection $T: C_p(X)\rightarrow C_p(X)$ which is not open. In particular,  if additionally $X$ is Eberlein compact, $T$ is not semi-embedding.
\end{enumerate}
\end{proposition}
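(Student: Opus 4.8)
The plan is to handle the two items by different means: item (1) I reduce to a duality (homomorphism) criterion, and item (2) I settle by an explicit operator combined with the splitting provided by Theorem~\ref{MAIN}.

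For (1), the implication ``$T$ an embedding $\Rightarrow X$ scattered'' is already recorded above: an embedding presents $C_p(X)$ as (isomorphic to) a subspace of $C_p(Y)$ with $Y$ compact scattered, and \cite[Theorem III.1.2]{Ark} then forces $X$ to be scattered. For the converse I pass to adjoints. Write $L(Z)=C_p(Z)'$ for the span of the evaluations $\delta_z$. The pointwise topology of $C(X)$ is the weak topology of the pair $\langle C(X),L(X)\rangle$, and the topology that $T$ induces on $C(X)$ is the weak topology of $\langle C(X),T'(L(Y))\rangle$, where $T'(L(Y))\subseteq L(X)$ because $T$ is continuous. Since two weak topologies of such pairs coincide exactly when the dual subspaces coincide, $T$ is an isomorphism onto its range (an embedding) if and only if $T'(L(Y))=L(X)$, whereas injectivity of $T$ only gives that $T'(L(Y))$ is weak$^*$-dense in $L(X)$. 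Thus the task is to upgrade weak$^*$-density to equality, i.e.\ to prove that the injective $T$ is a topological homomorphism; this is precisely the openness statement isolated in Corollary~\ref{pro1}.

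The heart of the matter, and the main obstacle, is this upgrade, and it is exactly where both hypotheses are needed. The semi-embedding (polar) condition only supplies a base $\mathcal U$ of closed $0$-neighbourhoods with each $T(U)$ closed in $C_p(Y)$; closedness of these images is strictly weaker than $T$ being a homomorphism, this gap being the whole content of the Lotz--Peck--Porta phenomenon, so polarity by itself is insufficient. To bridge it I would use that for $X$ Eberlein compact the space $C_p(X)$ is angelic, so that the weak$^*$-closure of $T'(L(Y))$ in $L(X)$ and the closures of the sets $T(U)$ can be tested along sequences. A putative evaluation $\delta_{x_0}\in\overline{T'(L(Y))}^{\,w*}\setminus T'(L(Y))$ would then produce, through the closed images $T(U)$, an infinite-dimensional closed subspace of $C_p(X)$ that is $\sigma$-compact (or carries a fundamental sequence of bounded sets) on which $T$ degenerates. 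As $X$ is scattered, Theorem~\ref{Main2} rules out such subspaces, which is the desired contradiction; hence $T'(L(Y))=L(X)$ and $T$ is an embedding.

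For (2) I first exhibit a non-open continuous linear bijection of $(c_0)_p$ and then transport it. Put $(T_0x)_n=x_n-\tfrac12 x_{n+1}$. Each output coordinate depends on finitely many input coordinates, so $T_0$ is continuous on $(c_0)_p$; on the Banach space $c_0$ it is $I-\tfrac12 L$ with $L$ the backward shift and $\|\tfrac12 L\|=\tfrac12<1$, hence a bijection of $c_0=(c_0)_p$ with $(T_0^{-1}y)_n=\sum_{k\ge 0}2^{-k}y_{n+k}$. This inverse is not finitely supported in any coordinate, so it is discontinuous on $(c_0)_p$ (apply it to $2^m e_{n+m}$, which tends to $0$ pointwise while the $n$-th coordinate of its image stays $1$); thus $T_0$ is a continuous linear bijection that is not open. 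Applying Theorem~\ref{MAIN} to the closed infinite-dimensional subspace $C_p(X)$ itself splits $C_p(X)=(c_0)_p\oplus Z$, and $T:=T_0\oplus\mathrm{id}_Z$ is then a continuous linear bijection of $C_p(X)$ which is not open, for otherwise $T^{-1}$, and hence its restriction $T_0^{-1}$ to the first summand, would be continuous. Finally, if $X$ is moreover Eberlein compact, then item (1) with $Y=X$ shows that every semi-embedding $C_p(X)\to C_p(X)$ is an embedding, so a homeomorphism; as $T$ is a non-open bijection, it cannot be a semi-embedding.
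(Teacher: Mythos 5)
Your item (1), converse direction, has a genuine gap. You correctly reduce ``$T$ is an embedding'' to the duality statement $T'(L(Y))=L(X)$, correctly note that injectivity only yields weak$^*$-density of $T'(L(Y))$ in $L(X)$, and correctly identify that upgrading density to equality (equivalently, openness of $T$ onto its range) is the crux that polarity alone cannot deliver. But your proposed bridge is not an argument: you say that a functional $\delta_{x_0}$ in the weak$^*$-closure of $T'(L(Y))$ but outside it ``would then produce, through the closed images $T(U)$, an infinite-dimensional closed $\sigma$-compact subspace of $C_p(X)$ on which $T$ degenerates,'' to be killed by Theorem \ref{Main2}. No such subspace is constructed, ``degenerates'' is never defined, and neither angelicity of $C_p(X)$ nor Theorem \ref{Main2} is actually connected to the polar sets $T(U)$. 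The ingredient you never invoke --- and which is exactly what the paper uses --- is that a scattered Eberlein compact space is a $\Delta$-space (Fact \ref{xx}, from \cite{KLe}), so that $C_p(X)$ is strongly distinguished, i.e.\ $C_p(X)'_{\beta}$ carries the finest locally convex topology. That is what makes the adjoint $T^{*}$ open onto its range and allows one to prove (Proposition \ref{pro5}) that each closed image $T(U)$ is bornivorous, hence a zero-neighbourhood in the range $T(C_p(X))$, which is quasibarrelled because $C_p(Y)$ is Fr\'echet--Urysohn ($Y$ scattered) and Fr\'echet--Urysohn locally convex spaces are bornological; this is Corollary \ref{pro1}. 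Without this step (or a genuine substitute), item (1) is unproven, and consequently so is the final sentence of your item (2), which quotes item (1).

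Item (2) proper, by contrast, is correct and genuinely different from the paper's proof. The paper constructs a retraction $r$ of $X$ onto a convergent sequence (embedding $X$ in a Cantor cube and using Haydon's $AE(0)$ theorem), composes with a shift $q$ to obtain a self-map $h$ of $X$ with an infinite orbit, invokes \cite[Proposition 2.1]{LLP} to get that $f\mapsto \lambda f+f\circ h$ is a continuous non-open surjection, and then verifies injectivity by the growth estimate $f(h^{k}(z))=(-\lambda)^{k}f(z)$. You instead exhibit the explicit operator $T_{0}=I-\tfrac12 L$ ($L$ the backward shift): it is pointwise-pointwise continuous since each output coordinate depends on two input coordinates, bijective by the Neumann series on the Banach space $c_0$, and non-open on $(c_0)_p$ by the test vectors $2^{m}e_{n+m}$; you then transport $T_{0}$ through a complemented copy of $(c_0)_p$ in $C_p(X)$ supplied by Theorem \ref{MAIN} (no circularity, as Theorem \ref{MAIN} is proved independently of Proposition \ref{pro3}), and the direct-sum argument for non-openness of $T_{0}\oplus \mathrm{id}_{Z}$ is sound. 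This route is more self-contained, avoiding both the $AE(0)$ retraction and the citation to \cite{LLP}.
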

For compact Eberlein scattered spaces $\{ [1,\alpha] \}_{\alpha<\omega_1}$, we proved in  \cite[Theorem 3.2]{Kakol-Leiderman2}  that if $\alpha\leq\beta<\omega_1$  and  $C_p([1,\alpha])$ and $C_p([1,\beta])$ are not isomorphic, then $C_p([1,\beta])$ is even not a  continuous linear image of $C_p([1,\alpha])$.

There are however compact scattered spaces $X$ which are not Eberlein  but   for which  Proposition \ref{pro3} still  holds.
\begin{corollary}\label{coro7}
If $X$ is the one-point compactification of the Isbell-Mr\'owka space and $Y$ is a compact scattered space,  then  a semi-embedding $T: C_p(X)\rightarrow C_p(Y)$ is embedding but  $X$ is not  Eberlein compact.
\end{corollary}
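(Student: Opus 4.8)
The plan is as follows. Write $\Psi=\mathbb{N}\cup\mathcal{A}$ for the Isbell-Mr\'owka space determined by an infinite \emph{maximal} almost disjoint family $\mathcal{A}$ of infinite subsets of $\mathbb{N}$, where each $n\in\mathbb{N}$ is isolated and a basic neighbourhood of $A\in\mathcal{A}$ has the form $\{A\}\cup(A\setminus F)$ with $F$ finite. Then $\Psi$ is locally compact, Hausdorff and scattered: its isolated points are exactly $\mathbb{N}$, the first Cantor--Bendixson derivative is the closed discrete set $\mathcal{A}$, and the second derivative is empty. Consequently the one-point compactification $X=\Psi\cup\{\infty\}$ is an infinite compact scattered space (of Cantor--Bendixson height $3$), and in particular $X$ falls under the scope of Corollary \ref{pro1}.

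First I would dispose of the embedding assertion. Since $X$ is compact and scattered and $Y$ is compact and scattered, Corollary \ref{pro1} applies verbatim and yields that every semi-embedding $T\colon C_p(X)\to C_p(Y)$ is an embedding. The point to stress is that the Eberlein hypothesis of Proposition \ref{pro3}(1) is \emph{not} needed here: scatteredness of $X$ alone feeds Corollary \ref{pro1}, so that no appeal to a weak topology of a Banach space generating $X$ is required.

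The substance of the statement is therefore the claim that $X$ is \emph{not} Eberlein compact, which I would prove by a separability/metrizability dichotomy. On the one hand $\mathbb{N}$ is dense in $\Psi$ (every basic neighbourhood of a point of $\mathcal{A}$ meets $\mathbb{N}$ in an infinite set) and hence dense in $X$, so $X$ is separable. On the other hand $\mathcal{A}$ is a discrete subspace of $X$, because the basic neighbourhood $\{A\}\cup(A\setminus F)$ isolates $A$ within $\mathcal{A}$; and a maximal almost disjoint family is necessarily uncountable, so $X$ contains an uncountable discrete subspace. A space with a countable base is hereditarily separable and hence cannot contain an uncountable discrete subspace, so $X$ is not second countable; being separable, $X$ is thus not metrizable. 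Finally, invoking the classical fact that every separable Eberlein compact space is metrizable (Eberlein compacta are Corson, and a Corson compact space that is ccc, in particular one that is separable, is metrizable), I conclude that $X$ is not Eberlein compact.

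The main obstacle is conceptual rather than technical: it is the recognition that $X$ is a scattered compact space lying outside the Eberlein (indeed outside the Corson) class, which is exactly what makes the corollary a genuine strengthening of Proposition \ref{pro3}(1). Once scatteredness and the compact-subset structure of $\Psi$ are recorded, the embedding half is an immediate application of Corollary \ref{pro1}, while the non-Eberlein half reduces to the standard separable-plus-nonmetrizable argument sketched above; no delicate estimates are required.
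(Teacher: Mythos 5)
Your argument for the embedding half has a genuine gap at its central step. You claim that ``scatteredness of $X$ alone feeds Corollary \ref{pro1}'', but the hypothesis of Corollary \ref{pro1} is that $X$ is a \emph{$\Delta$-space}, not that $X$ is scattered, and for compact spaces these are not the same: by Fact \ref{xx}, every compact $\Delta$-space is scattered, but the converse fails --- $[0,\omega_1]$ is compact and scattered yet is \emph{not} a $\Delta$-space. So recording that $X=\Psi\cup\{\infty\}$ is compact and scattered does not license the application of Corollary \ref{pro1}. The non-trivial input that your proof is missing is precisely the fact the paper cites from K\c akol--Leiderman (\cite[Theorem 3.10, Example 3.17]{KLe}): the one-point compactification of the Isbell-Mr\'owka space \emph{is} a $\Delta$-space, and it is this property (not scatteredness) that makes $C_p(X)$ strongly distinguished and lets Corollary \ref{pro1} run. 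A consistency check confirms the gap: if scatteredness alone sufficed for Corollary \ref{pro1}, then the Eberlein hypothesis in Proposition \ref{pro3}(1) would be pointless (its entire role, via Fact \ref{xx}, is to upgrade ``scattered'' to ``$\Delta$-space'' for Eberlein compacta) and Problem \ref{problem3} would be settled affirmatively for all compact scattered $X$, which the paper leaves open exactly because spaces like $[0,\omega_1]$ escape this method. A smaller, reparable imprecision: Corollary \ref{pro1} does not apply ``verbatim'' to a semi-embedding $T\colon C_p(X)\to C_p(Y)$; one must first check that the image $T(C_p(X))$ is quasibarrelled, which follows (as in the proof of Proposition \ref{pro3}(1)) because $Y$ compact scattered makes $C_p(Y)$ Fr\'echet-Urysohn, this property passes to subspaces, and Fr\'echet-Urysohn locally convex spaces are bornological, hence quasibarrelled.

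Your second half --- that $X$ is not Eberlein compact --- is correct and is essentially the paper's own argument: $X$ is separable (since $\mathbb{N}$ is dense) but not metrizable (it contains the uncountable discrete subspace $\mathcal{A}$), while separable Eberlein compacta are metrizable. This part needs no repair; only the $\Delta$-space step above must be supplied, by citation or by a direct verification that $X$ satisfies the definition of a $\Delta$-space, to make the proof complete.
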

The above results may suggest the following natural question whether the space $(c_0)_p$ contains a closed infinite-dimensional subspace which is not isomorphic to $(c_0)_p$. This will follow  from  the next  theorem.
\begin{theorem}\label{co}
Let $ E \subset c_{0} $ be a closed infinite-dimensional subspace of the Banach space $c_0$.  Let $ \varepsilon > 0 $. Then there exists an isomorphism $ T \in \mathcal{L}(c_{0}) $ with $ \Vert T - I_{c_{0}} \Vert < \varepsilon $ such that $ T(E) $ is closed in the pointwise topology of $ c_{0} $.
\end{theorem}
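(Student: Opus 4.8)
The plan is to reduce the statement to a question about annihilators in $\ell_{1}=(c_{0})'$ and then to build $T$ by an explicit small, essentially finite-rank, perturbation. First I would record the relevant description of pointwise closedness. Writing $P_{n}$ for the projection onto the first $n$ coordinates and $c_{00}\subseteq\ell_{1}$ for the finitely supported functionals (these are exactly the functionals continuous for the pointwise topology), a norm-closed subspace $F\subseteq c_{0}$ is pointwise closed if and only if $F=\{x\in c_{0}:P_{n}x\in P_{n}(F)\text{ for all }n\}$, equivalently if and only if $F^{\perp}\cap c_{00}$ is weak-$*$ dense in the annihilator $F^{\perp}\subseteq\ell_{1}$ (this is just the bipolar theorem for the dual pair $\langle c_{0},\ell_{1}\rangle$). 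Since $T(E)^{\perp}=(T^{-1})^{*}(E^{\perp})$ and $T^{*}$ is a weak-$*$ homeomorphism of $\ell_{1}$, applying $T^{*}$ turns the goal into the following: find a weak-$*$ continuous automorphism $W=T^{*}$ of $\ell_{1}$ with $\Vert W-I\Vert<\varepsilon$ such that $E^{\perp}\cap W(c_{00})$ is weak-$*$ dense in $E^{\perp}$. Thus everything is transported to tilting the ``finitely supported'' subspace $c_{00}$ slightly so that it meets $E^{\perp}$ densely.

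It is instructive to settle first the model case $\dim E^{\perp}<\infty$, which already exhibits the mechanism and is the prototype for the whole argument. Let $\phi_{1},\dots,\phi_{m}$ span $E^{\perp}$. For large $N$ the truncations $\psi_{i}:=P_{N}\phi_{i}$ are finitely supported, remain linearly independent, and satisfy $\Vert\phi_{i}-\psi_{i}\Vert_{1}=\Vert Q_{N}\phi_{i}\Vert_{1}\to0$, where $Q_{N}=I-P_{N}$. Choosing finitely supported $u_{i}\in c_{0}$ biorthogonal to $(\psi_{j})$ (of bounded norm, since the Gram data of $(\psi_{i})$ converge to that of the independent $(\phi_{i})$), the finite-rank operator $Sx=\sum_{i}\langle Q_{N}\phi_{i},x\rangle u_{i}$ has $\Vert S\Vert\le m\max_{i}\Vert Q_{N}\phi_{i}\Vert_{1}\Vert u_{i}\Vert_{\infty}\to0$, and a direct computation gives $T^{*}\psi_{i}=\psi_{i}+Q_{N}\phi_{i}=\phi_{i}$ for $T=I+S$. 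Hence each $\phi_{i}\in T^{*}(c_{00})$, so $E^{\perp}\subseteq T^{*}(c_{00})=W(c_{00})$, the required density is trivial, and for $\varepsilon$ small $T$ is automatically an isomorphism. Note that this correction genuinely spreads the tail of each $\phi_{i}$ across high coordinates; a triangular (``causal'') $T$ would be useless, since it preserves the coordinatewise closure.

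For infinite-dimensional $E^{\perp}$ I would run this finitisation inductively along a weak-$*$ dense sequence $(\phi_{k})$ in $E^{\perp}$ (which exists, $E^{\perp}$ being weak-$*$ separable), producing $T$ as a convergent product $T=\lim_{k}(I+S_{k})\cdots(I+S_{1})$ with $\sum_{k}\Vert S_{k}\Vert<\varepsilon$. At step $k$ the finitely many already-processed vectors occupy a fixed initial block of coordinates, while the current vector has an $\ell_{1}$-small tail beyond a large cutoff $N_{k}$; I would add a finite-rank correction $S_{k}$, with range in coordinates $>N_{k-1}$, that cancels this tail and thereby places a finitely supported functional close to $\phi_{k}$ into $T(E)^{\perp}$, while a biorthogonality requirement against the earlier (finitely many) data leaves the previous finitisations intact. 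The decisive quantitative point is that the correction functional $u_{k}$ must only be biorthogonal to the finitely many earlier vectors, so its norm is some fixed finite $C_{k}$; one then chooses $N_{k}$ so large, \emph{after} $C_{k}$ is known, that $C_{k}\Vert Q_{N_{k}}\phi_{k}\Vert_{1}<\varepsilon 2^{-k}$. In the limit the finitely supported functionals in $T(E)^{\perp}$ weak-$*$ approximate every $\phi_{k}$, hence are weak-$*$ dense in $E^{\perp}$, equivalently in $T(E)^{\perp}$, so $T(E)$ is pointwise closed, $\Vert T-I\Vert<\varepsilon$, and $T$ is an isomorphism.

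The point I expect to be the \emph{main obstacle} is the simultaneous management of the infinitely many corrections in the infinite-codimensional case. A weak-$*$ dense sequence in $E^{\perp}$ is by nature highly ``nearly dependent'', so a single global biorthogonal system would have correction functionals blowing up in norm and destroying the summability $\sum\Vert S_{k}\Vert<\varepsilon$; yet one cannot circumvent this by passing to a disjointly supported (block) sequence, since a block sequence spans only a proper weak-$*$ closed subspace and can never be weak-$*$ dense in a general $E^{\perp}$ (already the one-dimensional $E^{\perp}=\mathbb{R}\cdot(2^{-n})_{n}$ contains no nonzero block vector). Reconciling density with bounded, summable corrections is the technical heart, and the resolution is exactly the sequential, adaptive scheme above: process the coordinates in one increasing sweep, always spending a fresh interval of high coordinates so that each new correction need only be biorthogonal to finitely many earlier ones, and defeat the resulting norm growth by choosing each cutoff $N_{k}$ after the correction functional is fixed. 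It then remains to verify two points that I would treat as routine given this setup: that the limit operator is genuinely an automorphism of $c_{0}$ (not merely of $\ell_{1}$), which is guaranteed because every $S_{k}$ is a finite-rank adjoint with finitely supported range vectors and coefficient functionals so that $W=T^{*}$ stays weak-$*$ continuous; and that the accumulated finitely supported functionals are exactly, not just approximately, in $T(E)^{\perp}$, which is secured by the biorthogonality bookkeeping that freezes each relation once established.
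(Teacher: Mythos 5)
Your reduction of pointwise closedness to weak-$*$ density of $T(E)^{\perp}\cap c_{00}$ in $T(E)^{\perp}$ is correct, your finite-codimensional model case is complete, and your general strategy --- an infinite product of small finite-rank perturbations, each of which pulls one more finitely supported functional back into $E^{\perp}$, with biorthogonality conditions freezing the relations already established and cutoffs chosen adaptively after the correction vectors are fixed --- is in essence the paper's own proof read on the dual side (there, the $n$-th rank-one perturbation $T_{n}$ is designed so that $T_{n}^{*}$ sends a finitely supported $v_{n}^{*}$ exactly to a target $u_{n}^{*}$, and $\Vert v_{n}^{*}-u_{n}^{*}\Vert$ is made small only after the direction $b_{n}$ is known). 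However, your inductive step has a genuine gap: you compute the step-$k$ correction from the tail $Q_{N_{k}}\phi_{k}$ of the \emph{original} target $\phi_{k}$, and your bookkeeping only protects relations already established --- nothing protects the \emph{future} targets from the corrections already made. Concretely, with $T=\lim_{k}(I+S_{k})\cdots(I+S_{1})$ one has $T^{*}=\lim_{k}(I+S_{1}^{*})\cdots(I+S_{k}^{*})$, so after your freezing conditions $S_{m}^{*}\psi_{k}=0$ for $m>k$ we get $T^{*}\psi_{k}=(I+S_{1}^{*})\cdots(I+S_{k}^{*})\psi_{k}$. If step $k$ is designed so that $(I+S_{k}^{*})\psi_{k}=\phi_{k}$ (truncation plus cancelled tail), then $T^{*}\psi_{k}=R_{k-1}^{*}\phi_{k}$, where $R_{k-1}=(I+S_{k-1})\cdots(I+S_{1})$; since the earlier corrections move $E$, in general $R_{k-1}^{*}\phi_{k}\notin E^{\perp}$, i.e.\ $\psi_{k}\notin T(E)^{\perp}$ at all. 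This defect is of order $\Vert R_{k-1}^{*}-I\Vert\,\Vert\phi_{k}\Vert$, bounded but \emph{not} tending to $0$ with $k$, so it can be absorbed neither into the exactness claim nor into the final density argument.

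The repair stays inside your framework but must be stated: at step $k$ the functional whose tail you truncate and cancel is the \emph{transported} target $\phi_{k}':=(R_{k-1}^{*})^{-1}\phi_{k}\in\bigl(R_{k-1}(E)\bigr)^{\perp}$, not $\phi_{k}$ itself. Choose $u_{k}\in c_{0}$ with $\langle\phi_{k}',u_{k}\rangle=1$ and $\langle\psi_{j},u_{k}\rangle=0$ for $j<k$ (possible by weak-$*$ Hahn--Banach unless $\phi_{k}'\in\mathrm{span}\{\psi_{j}\}_{j<k}$, in which case $\phi_{k}'$ is already finitely supported and one puts $S_{k}=0$, $\psi_{k}=\phi_{k}'$), let $C_{k}=\Vert u_{k}\Vert$, then pick $N_{k}$ with $C_{k}\Vert Q_{N_{k}}\phi_{k}'\Vert_{1}$ tiny and $\langle P_{N_{k}}\phi_{k}',u_{k}\rangle\geq 1/2$, and set $\psi_{k}=P_{N_{k}}\phi_{k}'$ and $S_{k}x=\langle P_{N_{k}}\phi_{k}',u_{k}\rangle^{-1}\langle Q_{N_{k}}\phi_{k}',x\rangle u_{k}$. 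Then $(I+S_{k}^{*})\psi_{k}=\phi_{k}'$, hence $T^{*}\psi_{k}=R_{k-1}^{*}\phi_{k}'=\phi_{k}$ \emph{exactly}; the freezing works as you intended, and density of $\{\psi_{k}\}$ in $T(E)^{\perp}$ follows from density of $\{\phi_{k}\}$ in $E^{\perp}$ via the weak-$*$ homeomorphism $(T^{*})^{-1}$, rather than from ``$\psi_{k}$ approximates $\phi_{k}$''. It is worth seeing how the paper sidesteps the transport altogether: its targets are not an arbitrary weak-$*$ dense sequence but functionals $u_{n}^{*}$ spanning an increasing sequence of finite-dimensional subspaces $A_{n}$ with dense union, chosen together with vectors $a_{n}$ so that $u_{m}^{*}(a_{n})=0$ for all $m\neq n$; since the $n$-th perturbation acts in the direction $b_{n}=S_{n-1}(a_{n})$, this two-sided biorthogonality yields $T_{n}^{*}u_{m}^{*}=u_{m}^{*}$ for $m>n$, i.e.\ earlier corrections fix later targets exactly --- precisely the property your arbitrary dense sequence lacks and has to buy back by transporting.
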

\begin{corollary}\label{coo}
The space $(c_0)_p$ contains a closed infinite-dimensional subspace which is not isomorphic to $(c_0)_p$.
\end{corollary}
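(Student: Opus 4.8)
The plan is to use Theorem \ref{co} to transport a Banach subspace of $c_0$ that is \emph{not} linearly isomorphic to $c_0$ into a subspace of $(c_0)_p$ which is closed in the pointwise topology, and then to show that any linear homeomorphism in the pointwise topology between such subspaces is automatically a Banach isomorphism. First I would secure the Banach ingredient: by the solution of the homogeneous Banach space problem (Gowers; Komorowski--Tomczak-Jaegermann), the only infinite-dimensional separable Banach space isomorphic to all of its infinite-dimensional closed subspaces is a Hilbert space, and since $c_0\not\cong\ell_2$ there is a closed infinite-dimensional subspace $E\subseteq c_0$ with $E\not\cong c_0$ as Banach spaces. (Any concrete subspace of $c_0$ known to fail isomorphism with $c_0$ would serve equally well.) Applying Theorem \ref{co} to $E$ with, say, $\varepsilon=1$ yields an isomorphism $T\in\mathcal{L}(c_0)$ such that $F:=T(E)$ is closed in the pointwise topology of $c_0$. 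Since $T$ is a Banach isomorphism of $c_0$ onto itself and $E$ is norm-closed, $F$ is norm-closed, hence a Banach space, and $T|_E\colon E\to F$ is a Banach isomorphism; in particular $F\not\cong c_0$ as Banach spaces. Thus $F$ with the pointwise topology is a closed infinite-dimensional subspace of $(c_0)_p$, and it remains to see that $F_p\not\cong (c_0)_p$.

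The crux is the following upgrade argument. Suppose, toward a contradiction, that $\Phi\colon (c_0)_p\to F_p$ is a linear homeomorphism, and regard it as a linear bijection $c_0\to F\subseteq c_0$. I claim $\Phi$ has closed graph for the norm topologies: if $x_k\to x$ and $\Phi x_k\to y$ in norm, then $x_k\to x$ and $\Phi x_k\to y$ pointwise, while pointwise continuity of $\Phi$ gives $\Phi x_k\to\Phi x$ pointwise, forcing $\Phi x=y$. As $c_0$ and $F$ are Banach spaces, the closed graph theorem makes $\Phi\colon c_0\to F$ bounded; the identical argument applied to the pointwise-continuous inverse $\Phi^{-1}\colon F_p\to (c_0)_p$ shows $\Phi^{-1}$ is bounded as well. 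Hence $\Phi$ is a Banach isomorphism of $c_0$ onto $F$, contradicting $F\not\cong c_0$. Therefore no such $\Phi$ exists and $F_p$ is a closed infinite-dimensional subspace of $(c_0)_p$ not isomorphic to $(c_0)_p$, as required.

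I expect the main obstacle, and simultaneously the decisive point, to be the observation in the second paragraph that pointwise continuity already forces the \emph{norm} graph of $\Phi$ to be closed, so that the closed graph theorem converts an isomorphism of the $C_p$-type spaces into a genuine Banach isomorphism. The only care needed is to record that $F$ is honestly complete (being norm-closed in $c_0$) so that the closed graph theorem applies on both sides; this is exactly why Theorem \ref{co} must deliver a map $T$ that is a norm-isomorphism (keeping $F=T(E)$ norm-closed and Banach-isomorphic to $E$) while additionally making $T(E)$ pointwise-closed. The sole external input is the existence of the subspace $E\subseteq c_0$ with $E\not\cong c_0$, for which the resolution of the homogeneous space problem gives a clean, if heavy, justification.
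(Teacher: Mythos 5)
Your proof is correct and follows essentially the same route as the paper's: take a closed infinite-dimensional subspace $E\subset c_0$ not Banach-isomorphic to $c_0$, apply Theorem \ref{co} to replace it by a pointwise-closed copy, and then use the closed graph theorem (the graph being norm-closed because pointwise continuity forces it) to upgrade any hypothetical isomorphism of the $C_p$-type spaces to a Banach isomorphism, yielding a contradiction. The only difference is cosmetic: the paper cites a classical example from Lindenstrauss--Tzafriri for the existence of $E$, whereas you invoke the much heavier solution of the homogeneous Banach space problem (while noting a concrete example would do).
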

\section{Preliminaries and definitions}\label{Section2}
Recall the following two classes of metrizable dense subspaces of $\mathbb{R}^{\mathbb{N}}$: $$(c_0)_p=\{(x_{n})\in\mathbb{R}^{\mathbb{N}}: x_{n}\rightarrow 0\},\;(\ell_{\infty})_{p}=\{(x_{n})\in\mathbb{R}^{\mathbb{N}}: \sup_n|x_{n}|<\infty\}. $$
Let $(c_{00})_p=\{(x_{n})\in\mathbb{R}^{\mathbb{N}}: x_{n}=0\; \mbox{for sufficiently large}\; n\}.$

 The space   $(c_0)_p$ contains a copy of $(\ell_{\infty})_p$. In fact, let $(b_{n})\in c_0$ with $b_n\neq 0, n\in \mathbb{N}$. The map $$T: (\ell_{\infty})_p \rightarrow (c_{0})_p, (a_{n})\rightarrow (a_{n}b_{n})$$ is an isomorphism onto its range, so $(c_{0})_p$ contains a copy of $(\ell_{\infty})_p$.

 The spaces $(c_{00})_p$ and $(\ell_{q})_p$ for $0<q\leq\infty$ are
  proper $\sigma$-compact dense subspaces of $\mathbb{R}^{\mathbb{N}}$.

 A significant role of  the space $(c_0)_p$ is described by the following  results   from \cite[Theorem 3.1]{KMS} and \cite[Theorem 1]{BKS1}, respectively.
\begin{theorem} [K\c akol--Molto--\'Sliwa]
 For any infinite Tychonoff space $X$ the space $C_p(X)$ contains a copy of $(c_0)_p$. If  Tychonoff  $X$ contains an infinite compact subset, then $C_p(X)$ contains a closed copy of $(c_0)_p$.
 \end{theorem}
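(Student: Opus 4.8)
The plan is to realise $(c_0)_p$ inside $C_p(X)$ through a disjointly supported bump system. First I would produce points and functions $(x_n,f_n)$ and define the linear map $\Phi\colon(c_0)_p\to C_p(X)$ by $\Phi(a)=\sum_n a_nf_n$; the whole theorem then reduces to checking that $\Phi$ is an embedding (a linear homeomorphism onto its range) and that, under the extra hypothesis, the range is closed.

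\emph{Construction of the system.} Since every infinite Hausdorff space contains a countably infinite discrete subspace, I would choose distinct points $x_n$ together with pairwise disjoint open sets $V_n\ni x_n$; using that $X$ is regular I can shrink these so that, by complete regularity, there are $f_n\in C(X)$ with $0\le f_n\le 1$, $f_n(x_n)=1$ and $\mathrm{supp}\,f_n\subseteq V_n$. For the second assertion I would perform the selection \emph{inside} an infinite compact subset $K\subseteq X$, i.e.\ with all $x_n\in K$ and the $V_n$ still pairwise disjoint and open in $X$; this is where the compactness hypothesis is spent. Because the supports are pairwise disjoint, for each $x\in X$ at most one term of $\sum_n a_nf_n(x)$ is non-zero, so $\Phi(a)$ is well defined pointwise and $\Phi(a)(x_n)=a_n$.

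\emph{$\Phi$ is an embedding.} The only non-formal point is that $\Phi(a)$ is continuous for every $a\in(c_0)_p$, and this is exactly where $a_n\to 0$ is used. At a point lying in some $V_{n_0}$ one has $\Phi(a)=a_{n_0}f_{n_0}$ on the open set $V_{n_0}$, hence continuity there; at a point $x_0\notin\bigcup_nV_n$ one has $\Phi(a)(x_0)=0$, and given $\varepsilon>0$ the set $F=\{n:|a_n|\ge\varepsilon\}$ is finite, so on the open neighbourhood $\bigcap_{n\in F}\{|f_n|<\varepsilon/(1+\|a\|_\infty)\}$ of $x_0$ the single non-zero term is $<\varepsilon$ in absolute value, giving continuity. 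Injectivity is immediate from $\Phi(a)(x_n)=a_n$, continuity of $\Phi$ follows coordinatewise, and the inverse is the restriction of the continuous evaluation map $g\mapsto(g(x_n))_n$ from $C_p(X)$ into $(c_0)_p$; thus $\Phi$ is a linear homeomorphism onto its range, proving the first assertion.

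\emph{Closedness --- the main obstacle.} Suppose $g$ lies in the pointwise closure of $\Phi((c_0)_p)$, say $g=\lim_\beta\Phi(a^\beta)$ pointwise, and put $a_n=g(x_n)=\lim_\beta a^\beta_n$. Since $\mathrm{supp}\,f_n\subseteq V_n$, on each $V_n$ we get $g=a_nf_n$ and off $\bigcup_n\mathrm{supp}\,f_n$ we get $g=0$, so $g$ coincides with $\sum_n a_nf_n$ everywhere; it remains to force $a\in(c_0)_p$. If $a_n\not\to 0$, pick $\delta>0$ and a subsequence with $|g(x_{n_k})|\ge\delta$. As $\{x_{n_k}\}\subseteq K$ and $K$ is compact, this subsequence has a cluster point $q\in K$. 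Since the $V_n$ are pairwise disjoint, no $V_m$ can contain infinitely many of the $x_{n_k}$, so $q\notin\bigcup_nV_n$ and hence $f_n(q)=0$ for all $n$; then $\Phi(a^\beta)(q)=0$ for every $\beta$, giving $g(q)=0$, while continuity of $g$ forces $|g(q)|\ge\delta$ --- a contradiction. Therefore $a_n\to 0$, $g=\Phi(a)\in\Phi((c_0)_p)$, and the range is closed. The crux of the argument is precisely this use of an ambient compact set to manufacture, from any putative failure of $a_n\to 0$, a cluster point lying outside every $V_n$ at which $g$ must simultaneously vanish and be bounded away from $0$.
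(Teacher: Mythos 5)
The paper itself gives no proof of this statement (it is imported from \cite[Theorem 3.1]{KMS}), so your argument has to stand on its own. Its analytic core does: the map $\Phi(a)=\sum_n a_nf_n$ built from bumps with pairwise disjoint cozero sets, inverted by the evaluation $g\mapsto(g(x_n))_n$, is the right mechanism; your continuity check for $\Phi(a)$ is correct; and the closedness argument is exactly where compactness must be spent and you spend it correctly --- a cluster point $q\in K$ of $\{x_{n_k}\}$ cannot lie in any $V_m$ (each $V_m$ contains only one point of the family), so $g(q)=0$, while $\{|g|\ge\delta\}$ is closed and contains $q$, a contradiction.

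The gap is in the selection step, in both halves. For the first assertion, the fact you invoke --- every infinite Hausdorff space contains a countably infinite \emph{discrete} subspace --- does not give what you use: a discrete subspace only provides, for each $x_n$, a neighbourhood missing the other points, and in general such a subspace need not admit a pairwise disjoint open expansion. What you actually need (and what is true) is that every infinite Hausdorff space carries an infinite family of pairwise disjoint nonempty open sets; this follows by a recursion using that in a $T_1$ space every neighbourhood of a non-isolated point is infinite (keep one of two separated infinite open sets as the next cell, the other as the reservoir), and then you pick $x_n\in V_n$. For the second assertion the unproved claim is sharper: you need $x_n\in K$ but the $V_n$ pairwise disjoint \emph{and open in $X$}, and this does not follow from running the cellular argument inside $K$, because pairwise disjoint relatively open subsets of $K$ need not refine to pairwise disjoint open subsets of $X$ (Hausdorffness separates finitely many points, not infinitely many at once). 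A repair: choose an infinite cellular family $\{U_n\}$ of $K$ with $x_n\in U_n$, use regularity of the compact space $K$ to find compact $C_n\subseteq U_n$ with $x_n\in C_n$, and recursively separate in $X$ the disjoint compact sets $C_n$ and $K\setminus(U_1\cup\dots\cup U_n)$ by disjoint open sets, intersecting at each step with the previous reservoir; this yields pairwise disjoint $X$-open $V_n\supseteq C_n$. (Alternatively, map $X$ continuously into $\mathbb{R}^{\mathbb{N}}$ by countably many functions separating countably many points of $K$, and pull back pairwise disjoint balls chosen around infinitely many of the image points in the infinite compact metrizable image of $K$.) With such a lemma inserted, your proof is complete; without it, both constructions rest on an unjustified --- though true --- separation claim.
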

 \begin{theorem} [Banakh--K\c akol--\'Sliwa]
For any infinite Tychonoff space $X$ the space $C_p(X)$ contains a complemented copy of $(c_0)_p$ if and only if $C_p(X)$ admits a continuous linear surjection onto  $(c_0)_p$ if and only if $C_p(X)$ satisfies the Josefson-Nissenzweig property.
\end{theorem}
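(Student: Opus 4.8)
The plan is to establish the three conditions as equivalent by proving the cycle
\[
\text{(complemented copy of }(c_0)_p)\Rightarrow\text{(surjection onto }(c_0)_p)\Rightarrow\text{(JNP)}\Rightarrow\text{(complemented copy of }(c_0)_p).
\]
The first arrow is immediate: composing a continuous linear projection of $C_p(X)$ onto a copy of $(c_0)_p$ with the isomorphism identifying that copy with $(c_0)_p$ produces a continuous linear surjection onto $(c_0)_p$.

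For the second arrow, suppose $T\colon C_p(X)\to(c_0)_p$ is a continuous linear surjection and let $\pi_n\colon(c_0)_p\to\mathbb{R}$ denote the $n$-th coordinate functional. I would set $\mu_n:=\pi_n\circ T\in C_p(X)'$, so that each $\mu_n$ is a finitely supported signed measure on $X$; since $Tf\in c_0$ for every $f$, one has $\mu_n(f)=(Tf)_n\to0$, i.e. $(\mu_n)$ is weak${}^*$-null. Surjectivity provides $f_n$ with $Tf_n=e_n$, whence $\mu_n(f_n)=1$ and $\mu_n\neq0$. The one point requiring care is normalization: using surjectivity I would argue that $\|\mu_n\|$ (total variation) does not tend to $0$, so that along a subsequence $\|\mu_n\|\ge\delta>0$; then $\nu_n:=\mu_n/\|\mu_n\|$ satisfies $\|\nu_n\|=1$ and $|\nu_n(f)|\le|\mu_n(f)|/\delta\to0$, which is precisely the Josefson--Nissenzweig property.

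The substance of the theorem is the last arrow, and here is how I would proceed. Starting from a weak${}^*$-null sequence $(\mu_n)$ of finitely supported signed measures with $\|\mu_n\|=1$, I would first use a gliding-hump/diagonal extraction to replace it (passing to a subsequence, restricting, and renormalizing) by a weak${}^*$-null norm-one sequence whose measures have \emph{pairwise disjoint} finite supports $F_n$. Complete regularity of $X$ then lets me enclose the $F_n$ in pairwise disjoint open sets $U_n$ and choose $g_n\in C(X)$ with $\operatorname{supp}g_n\subset U_n$, $\|g_n\|_\infty\le1$ and $\mu_n(g_n)=1$; disjointness gives $\mu_m(g_n)=\delta_{mn}$. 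I would then define $T\colon C_p(X)\to(c_0)_p$ by $Tf=(\mu_n(f))_n$, which is continuous with range in $c_0$ by weak${}^*$-nullity, together with a section $S\colon(c_0)_p\to C_p(X)$, $S((t_n))=\sum_n t_n g_n$. Since $\mu_m$ is supported on $F_m\subset U_m$, one computes $TS=\mathrm{id}_{(c_0)_p}$, so that $P:=ST$ is a continuous linear projection of $C_p(X)$ whose range is carried isomorphically onto $(c_0)_p$ by $T$; being the range of a continuous projection, it is a complemented copy of $(c_0)_p$, which closes the cycle.

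The main obstacle is the continuity of the section $S$, that is, the verification that $\sum_n t_n g_n$ is a genuinely \emph{continuous} function on $X$ for every $(t_n)\in c_0$. Pointwise the series causes no trouble, since the disjointness of the supports means at most one summand is nonzero at each point; continuity can fail only at accumulation points of $\bigcup_n U_n$, and it is exactly there that the decay $t_n\to0$ must be exploited. I expect that the extraction step has to be carried out with this in mind---shrinking the $U_n$ so that the family $\{\overline{U_n}\}$ is suitably separated (each point of $X$ having a neighbourhood that meets only finitely many of the $\overline{U_n}$, or lying in the closure of none of the relevant ones)---so that for a given $\varepsilon$ only the finitely many indices with $|t_n|\ge\varepsilon$ matter near any point. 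Once $S$ is known to map into $C_p(X)$, the remaining continuity of $S$ is automatic, because each evaluation $(t_n)\mapsto S((t_n))(x)$ is a single coordinate $t_{n(x)}g_{n(x)}(x)$, and the projection $P=ST$ then furnishes the complemented copy.
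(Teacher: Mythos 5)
A point of order first: the paper does not prove this statement at all --- it is quoted from \cite[Theorem 1]{BKS1} --- so your proposal can only be judged against what such a proof genuinely requires; the relevant machinery does appear in the paper, in the proofs of Proposition \ref{0} and Theorem \ref{MAIN3}. Your skeleton is the right one (the cycle of implications, the biorthogonal pair $(\mu_n,g_n)$, the operators $Tf=(\mu_n(f))_n$ and $S((t_n))=\sum_n t_ng_n$, and $P=ST$), but both places where the substance of the theorem lies are asserted rather than proved, and one of them is wrong as described. The first gap is in (b)$\Rightarrow$(c): the sentence ``using surjectivity I would argue that $\|\mu_n\|$ does not tend to $0$'' is exactly the step that needs a proof. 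It is easy only when $X$ is pseudocompact: if $\|\mu_n\|\to 0$, take $t_n=\sqrt{\|\mu_n\|}$; any preimage $f$ of $(t_n)$ must satisfy $\sup_{x\in \mathrm{supp}\,\mu_n}|f(x)|\geq t_n/\|\mu_n\|=1/\sqrt{\|\mu_n\|}\to\infty$, impossible when every continuous function on $X$ is bounded. But the theorem is stated for arbitrary Tychonoff $X$, where preimages may be unbounded on $\bigcup_n\mathrm{supp}\,\mu_n$; there this argument collapses, cancellation inside the finite supports defeats the obvious repairs (e.g.\ composing a preimage with $r\mapsto r^2$), and establishing the claim requires a genuine construction, which you do not supply.

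The second and more serious gap is the disjointification step in (c)$\Rightarrow$(a), which you present as a routine ``gliding-hump/diagonal extraction'' consisting of ``passing to a subsequence, restricting, and renormalizing.'' That operation does not preserve weak$^{*}$-nullity. Take $X=\{0\}\cup\{1/n:n\in\mathbb{N}\}$ and $\mu_n=\frac{1}{2}(\delta_{1/n}-\delta_0)$: this is a Josefson--Nissenzweig sequence, every support contains $0$, and restricting to the ``fresh'' points and renormalizing produces $\delta_{1/n}$, which is not weak$^{*}$-null (test $f\equiv 1$). Any correct disjointification must \emph{cancel} the recurring part of the measures --- for instance by passing to normalized differences $\mu_{n_{2k}}-\mu_{n_{2k+1}}$ --- and proving that this can always be done, with the resulting measures null against \emph{all} of $C(X)$ (including unbounded functions when $X$ is not compact) and with norms bounded away from zero, is precisely the technical heart of \cite{BKS1}; it cannot be dismissed in a sentence. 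By contrast, the step you single out as ``the main obstacle,'' the continuity of $S$, is the easy part and needs no shrinking or separation of the closures $\overline{U_n}$: if the $U_n$ are pairwise disjoint open sets, $\|g_n\|_\infty\leq 1$, $\mathrm{supp}\,g_n\subset U_n$ and $t\in c_0$, then for $\varepsilon>0$ the set $\{x:|\sum_n t_ng_n(x)|\geq\varepsilon\}$ is the union of the closed sets $\{x:|t_ng_n(x)|\geq\varepsilon\}$ over the finitely many $n$ with $|t_n|\geq\varepsilon$, and continuity follows exactly as in the paper's own proof of Theorem \ref{MAIN3}, implication (1)$\Rightarrow$(3). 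In short, your write-up elaborates what is standard and waves at the two points that actually carry the theorem.
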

Consequently, if a compact space $X$ is scattered, then $C_p(X)$ contains a complemented copy of $(c_0)_p$.

 In \cite{KSZ} K\c akol, Sobota and Zdomskyy showed that there exist compact non-scattered spaces $X$ such that $C(X)$ contains a complemented copy of the Banach space $c_0$ but $C_p(X)$ does not contain any  complemented copy of $(c_0)_p$.

In order to present Corollary  \ref{pro1} let us recall that a  topological space $X$ is a \emph{$\Delta $-space} if for
every decreasing sequence $(D_{n})_{n}$ of subsets of $X$ with
empty intersection, there is a decreasing sequence $(V_{n})_n $ of open subsets of $X$, with empty intersection,
such that $D_{n}\subset V_{n}$ for every $n\in \mathbb{N}$, see  \cite{Kn}, \cite{KLe}.

Knight \cite{Kn} called $\Delta $\textit{-sets} all topological spaces $X$
satisfying the above definition. The original definition of a $\Delta $%
-set of the real line is due to Reed and van Douwen, see \cite{Re}.

Recall
that a set of real numbers $X$ is called a $Q$-set if each subset of $X$ is
a $G_{\delta }$ set in $X$. Note that the existence of uncountable $Q$-sets is
independent of ZFC. Every $Q$-set is a $\Delta $-set, but consistently the
converse is not true, see \cite{Kn}.

K\c{a}kol and Leiderman \cite{KLe} proved the following characterizations:
\begin{fact}\label{xx}  For a Tychonoff space $X$,  the strong dual $C_{p}(X)'_{\beta}$ of $C_p(X)$ carries the finest locally convex topology if and only if $X$ is a $\Delta$-space, \cite[Theorem 2.1]{KLe}.   Every compact $\Delta $-space is scattered, \cite[Theorem 3.4]{KLe}. The compact scattered space $[0,\omega _{1}]$  is not a $\Delta $-space,  but  an Eberlein compact space $X$ is a $\Delta$-space if and only if $X$ is scattered, see  \cite[Theorem 3.2]{KLe} and \cite[Theorem 3.7]{KLe}, respectively.
\end{fact}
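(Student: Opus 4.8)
The plan is to treat the four assertions separately, since they are of rather different natures. Throughout, the key structural fact is that the dual $C_p(X)'$ is the linear span of the evaluation functionals $\{\delta_x:x\in X\}$, so it may be identified with the space of finitely supported signed measures on $X$; a set $B\subseteq C_p(X)$ is bounded precisely when it is pointwise bounded, and hence $\beta(C_p(X)',C_p(X))$ is the topology of uniform convergence on pointwise bounded sets. For the first assertion I would show that $C_p(X)'_\beta$ carries the finest locally convex topology $\tau_{\max}$ if and only if every absolutely convex absorbing subset $W$ of $C_p(X)'$ contains the polar $B^{\circ}$ of some pointwise bounded $B\subseteq C_p(X)$. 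The substance is the dictionary between such sets $W$ (equivalently, the lower semicontinuous seminorms, or ``weight assignments'', on the free space) and separations of decreasing sequences of subsets of $X$: given a decreasing $(D_n)$ with $\bigcap_n D_n=\emptyset$ one manufactures an absorbing $W$ for which ``being a $\beta$-neighbourhood'' translates exactly into the existence of decreasing open $V_n\supseteq D_n$ with $\bigcap_n V_n=\emptyset$, and conversely. Verifying this correspondence in both directions is where essentially all the work in this part lies.

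The second assertion, that a compact $\Delta$-space is scattered, I would prove cleanly in three steps. First, $[0,1]$ is not a $\Delta$-space: enumerating $\mathbb{Q}\cap[0,1]=\{q_k\}$ and putting $D_n=\{q_k:k\ge n\}$, each $D_n$ is dense, so any open $V_n\supseteq D_n$ is dense open and $\bigcap_n V_n$ is a dense $G_\delta$, nonempty by Baire's theorem. Second, a compact non-scattered $X$ admits a continuous surjection $f:X\to[0,1]$ (it has a nonempty perfect subset, which maps onto $[0,1]$, and one extends by Tietze). Third, a ``small image'' lemma: since $X$ is compact, $f$ is closed, so for open $W\subseteq X$ the set $f_{\ast}(W)=\{t:f^{-1}(t)\subseteq W\}=[0,1]\setminus f(X\setminus W)$ is open. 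If open $W_n\supseteq f^{-1}(D_n)$ were decreasing with $\bigcap_n W_n=\emptyset$, then each $f_\ast(W_n)$ would be open, would contain $D_n$ (as $f^{-1}(t)\subseteq f^{-1}(D_n)\subseteq W_n$ for $t\in D_n$), would be decreasing, and would satisfy $\bigcap_n f_\ast(W_n)=\emptyset$ by surjectivity, contradicting the first step. Hence $X$ is not a $\Delta$-space.

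For the third assertion, that $[0,\omega_1]$ is not a $\Delta$-space, I would fix pairwise disjoint stationary sets $S_1,S_2,\dots\subseteq\omega_1$ consisting of limit ordinals (obtained by repeatedly splitting a stationary set) and set $D_n=\bigcup_{k\ge n}S_k$, so that $(D_n)$ decreases with $\bigcap_n D_n=\emptyset$. If $V_n\supseteq D_n$ is open, then for each $\alpha\in S_n$ one has $(\beta_\alpha,\alpha]\subseteq V_n$ for some $\beta_\alpha<\alpha$, and Fodor's pressing-down lemma produces a fixed $\beta_n^\ast$ with $(\beta_n^\ast,\omega_1)\subseteq V_n$. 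Taking $\beta^\ast=\sup_n\beta_n^\ast<\omega_1$ gives $(\beta^\ast,\omega_1)\subseteq\bigcap_n V_n\neq\emptyset$, so $(D_n)$ cannot be separated.

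Finally, for the Eberlein dichotomy, the implication $\Delta\Rightarrow$ scattered is the second assertion. For the reverse I would use that a scattered Eberlein compact space is strong Eberlein, i.e. embeds into $\{0,1\}^{\Gamma}$ with every point of finite support (equivalently, exploit the $\sigma$-point-finite $T_0$-separating family of open $F_\sigma$ sets characterizing Eberlein compacta). The model case is $A(\Gamma)$, the one-point compactification of a discrete set $\Gamma$: for any decreasing $(D_n)$ with empty intersection one has $\infty\notin D_N$ for some $N$, so $D_n$ lies among the isolated points and is already open for $n\ge N$, and one separates by taking $V_n=X$ for $n<N$ and $V_n=D_n$ for $n\ge N$. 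I expect the general case to follow by transfinite induction on the Cantor--Bendixson height, the point-finiteness of the Eberlein separating family supplying at each level exactly the room to choose the open sets $V_n$ that the abundance of isolated points supplies for $A(\Gamma)$. This last step is the main obstacle, and it is the only place where the Eberlein hypothesis is genuinely used, consistent with the third assertion, which shows that scatteredness alone does not suffice.
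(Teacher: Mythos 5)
First, a structural point: the paper does not prove this Fact at all --- it is a compilation of four results quoted with citations from K\c{a}kol--Leiderman \cite[Theorems 2.1, 3.4, 3.2, 3.7]{KLe}, so your proposal can only be judged on its own correctness, not against an in-paper argument. On that score, your treatments of the second and third assertions are correct and complete. The Baire-category argument that $[0,1]$ is not a $\Delta$-space, the continuous surjection of a non-scattered compact space onto $[0,1]$, and the push-forward $f_{*}(W)=[0,1]\setminus f(X\setminus W)$ (open because $f$ is closed) combine into a clean proof that every compact $\Delta$-space is scattered; and the Fodor-lemma argument with countably many pairwise disjoint stationary sets of limit ordinals settles $[0,\omega_{1}]$ correctly (indeed it proves the stronger statement that \emph{any} open $V_{n}\supseteq D_{n}$ have nonempty intersection).

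However, two of the four assertions are not actually proved. For the duality characterization (\cite[Theorem 2.1]{KLe}) you only set up the framework --- identifying $C_{p}(X)'$ with the finitely supported measures and $\beta(C_{p}(X)',C_{p}(X))$ with uniform convergence on pointwise bounded sets --- and then declare that the ``dictionary'' between absorbing absolutely convex sets $W$ and separations of decreasing sequences $(D_{n})$ is ``where essentially all the work lies.'' That dictionary \emph{is} the theorem: nothing in your text constructs an absorbing $W$ that fails to be a $\beta$-neighbourhood from a non-separable sequence $(D_{n})$, nor conversely extracts the open sets $V_{n}$ from the hypothesis that every such $W$ is a $\beta$-neighbourhood; as written this part is a statement of intent, not a proof. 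Similarly, for the Eberlein equivalence (\cite[Theorem 3.7]{KLe}) the substantive direction is ``scattered $\Rightarrow$ $\Delta$'': your reduction to strong Eberlein compacta (this is Alster's theorem, which you invoke without naming or proving) and the model case of the one-point compactification $A(\Gamma)$ are fine, but the transfinite induction on Cantor--Bendixson height that you ``expect'' to go through is exactly the place where the point-finiteness of the separating family must be exploited, and you concede it is ``the main obstacle.'' Until that induction is carried out, this part too has a genuine gap.
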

\section{Proofs of Theorem \ref{MAIN} and  Propositions \ref{pro3}, \ref{0}, \ref{1}}
In order to prove Theorem \ref{MAIN} first we show the following crucial
\begin{proposition}\label{0}
Let $ K $ be a scattered compact space and $ E $ be a norm-closed infinite-dimensional subspace of $ C(K) $. Then $ (E, \tau_{p}) $ contains an isomorphic copy of $ (c_{0})_{p} $ that is complemented in $ C_{p}(K) $.
\end{proposition}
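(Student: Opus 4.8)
The plan is to isolate the two features any $\tau_{p}$-complemented copy of $(c_{0})_{p}$ must have and then build them by hand. Suppose $Z\cong(c_{0})_{p}$ is the desired copy and $f_{k}$ are the images of the unit vectors. A $\tau_{p}$-continuous projection $P\colon C_{p}(K)\to Z$ forces its coordinate functionals to be $\tau_{p}$-continuous on $C_{p}(K)$, hence finitely supported measures (the $\tau_{p}$-dual of $(c_{0})_{p}$, and of $C_{p}(K)$, is the space of finitely supported functionals); and $\tau_{p}$-continuity of the inclusion $Z\hookrightarrow C_{p}(K)$ forces each evaluation $(a_{k})\mapsto\sum_{k}a_{k}f_{k}(x)$ to be $\tau_{p}$-continuous, i.e. $\{k:f_{k}(x)\neq0\}$ is finite for every $x\in K$ (\emph{pointwise finite} supports). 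Conversely, these are also sufficient: a biorthogonal system $(f_{k},\psi_{k})$ with $f_{k}\in E$ of pointwise finite supports, $(f_{k})$ equivalent to the $c_{0}$-basis, $\psi_{k}$ finitely supported measures, $\psi_{k}(f_{j})=\delta_{kj}$, and $\psi_{k}(h)\to0$ for every $h\in C(K)$, produces the copy: $\phi((a_{k}))=\sum_{k}a_{k}f_{k}$ is a $\tau_{p}$-homeomorphism onto $Z$ (and $\sum_{k}a_{k}f_{k}$ converges in norm by $c_{0}$-equivalence, so it lies in the norm-closed $E$), while $P(h)=\sum_{k}\psi_{k}(h)f_{k}$ is the projection (each coordinate is a finite sum, hence $\tau_{p}$-continuous). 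So the whole task reduces to constructing such a system.

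Next I would reduce to a countable compactum. Apply the Lotz--Peck--Porta Theorem \ref{PS} (this is where scatteredness of $K$ is used) to obtain $G\subseteq E$ with $G\cong c_{0}$ and a normalized $c_{0}$-basis $(g_{n})$; being weakly null in $C(K)$, it is pointwise null. The evaluation map $q\colon K\to K_{0}:=q(K)\subseteq\mathbb{R}^{\mathbb{N}}$, $q(x)=(g_{n}(x))_{n}$, is a continuous surjection onto a metrizable compactum; as a continuous image of the scattered $K$ it is scattered, hence $K_{0}$ is a \emph{countable} compact space. Each $g_{n}$ factors as $g_{n}=\hat g_{n}\circ q$ with $\hat g_{n}\in C(K_{0})$ the coordinate projection, and $q^{*}\colon C(K_{0})\to C(K)$, $h\mapsto h\circ q$, is an isometric embedding carrying $\hat G:=\overline{\mathrm{span}}(\hat g_{n})$ isometrically onto $G$. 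Point masses and pointwise finiteness transport along $q$, so it suffices to build the biorthogonal system inside $(\hat G,\tau_{p})\subseteq C_{p}(K_{0})$ and then pull it back by $q^{*}$.

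The heart of the argument, and the step I expect to be the \textbf{main obstacle}, is the construction on $K_{0}$. Fix an enumeration $K_{0}=\{y_{1},y_{2},\dots\}$ and build inductively consecutive blocks $B_{1}<B_{2}<\cdots$, normalized $f_{k}=\sum_{n\in B_{k}}c_{n}\hat g_{n}$, and points $z_{k}$ so that, with $N_{k-1}=\{y:f_{1}(y)=\dots=f_{k-1}(y)=0\}$: (i) $f_{k}$ vanishes at $z_{1},\dots,z_{k-1}$ and at $y_{1},\dots,y_{k-1}$; (ii) $z_{k}\in N_{k-1}$ with $|f_{k}(z_{k})|\ge 1/2$. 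Requirement (i) is finitely many linear conditions, met once $|B_{k}|$ is large. The genuinely delicate point is (ii): one must choose the block $f_{k}$, with indices beyond $B_{k-1}$ and subject to (i), so that it still attains size $\ge 1/2$ somewhere on the common zero set $N_{k-1}$. This is exactly where the countable scattered structure and the pointwise nullity of $(\hat g_{n})$ must be exploited, keeping $\hat G|_{N_{k-1}}$ infinite-dimensional along the induction and selecting the block by a gliding-hump argument. Granting (i)--(ii): (ii) gives $f_{i}(z_{j})=0$ for $i<j$ and (i) gives $f_{k}(z_{j})=0$ for $j<k$, so $f_{i}(z_{j})=\delta_{ij}f_{i}(z_{i})$ (full biorthogonality by point evaluations); (i) at the $y_{i}$ yields pointwise finite supports; and a normalized block basis of a $c_{0}$-basis is $c_{0}$-equivalent.

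Finally I would produce the functionals and assemble. Passing to a subsequence, assume $z_{k}\to z_{\infty}$; since $f_{i}(z_{k})=0$ for all large $k$, continuity gives $f_{i}(z_{\infty})=0$ for every $i$. Hence the finitely supported measures $\psi_{k}=(\delta_{z_{k}}-\delta_{z_{\infty}})/f_{k}(z_{k})$ satisfy $\psi_{k}(f_{j})=\delta_{kj}$ and $|\psi_{k}(h)|\le 2\,|h(z_{k})-h(z_{\infty})|\to0$ for every $h\in C(K_{0})$. This is precisely the biorthogonal system demanded in the first paragraph; forming $\phi$ and $P$ on $C_{p}(K_{0})$ and transporting by $q^{*}$ (point masses on $K_{0}$ pull back to finite combinations of point evaluations on $K$, and pointwise finiteness is preserved) exhibits inside $(E,\tau_{p})$ an isomorphic copy of $(c_{0})_{p}$ complemented in $C_{p}(K)$, as required.
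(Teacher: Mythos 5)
Your first paragraph (the biorthogonal-system criterion) and your final assembly (functionals $\psi_k=(\delta_{z_k}-\delta_{z_\infty})/f_k(z_k)$ built from a convergent sequence of evaluation points) are sound and in fact mirror how the paper finishes. But the proposal has a genuine gap exactly where you flag it: the inductive step (ii) is never established, and it is the entire mathematical content of the proposition. At each stage you need a normalized block of $(\hat g_n)$, supported beyond $B_{k-1}$ and killed by $2(k-1)$ point conditions, whose supremum over the common zero set $N_{k-1}$ is at least the fixed constant $1/2$ (the uniform constant is essential: if the lower bounds $|f_k(z_k)|$ were allowed to degenerate, the $\psi_k$ would not be uniformly bounded and $\psi_k(h)\to 0$ would fail). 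The induction as stated can genuinely dead-end: on $K_0=\omega+1$ take $g_1$ equal to $0$ at the point $1$ and equal to $1$ elsewhere, and $g_n$ the indicator of $\{n\}$ for $n\geq 2$; this is a $c_0$-basic sequence, $f_1=g_1$ is an admissible first choice, yet then $N_1=\{1\}$ and every function in the span of $\{g_n: n\geq 2\}$ vanishes identically on $N_1$, so no admissible $f_2$ exists. Hence one must formulate and propagate an invariant guaranteeing that good choices always remain available; neither "keeping $\hat G|_{N_{k-1}}$ infinite-dimensional" (which in any case does not produce far-out blocks that are large \emph{in proportion to their norm} on $N_{k-1}$) nor the words "gliding hump" supply that argument. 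The paper sidesteps this obstacle entirely: scatteredness enters through \cite[Theorem 12 (5)]{KLS}, which directly yields a sequence of non-zero elements of $E$ with pointwise finite supports; an LPP $c_0$-sequence is then perturbed into the tail spans $F_n$ of that sequence, so pointwise finiteness is inherited rather than engineered. Moreover the paper only demands exact vanishing in one direction (later functions at earlier points, free from pointwise finiteness) and mere smallness in the other (from $f_{n_{k_j}}(x_{n_k})\to f_{n_{k_j}}(y)=0$), restoring exact biorthogonality afterwards by a Gram--Schmidt correction of the Dirac differences with the estimate $\Vert g^*_l\Vert\leq 3/a$. Your insistence that $z_k$ lie in the exact common zero set $N_{k-1}$ is precisely what makes your step (ii) hard, and it is not needed.

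There is a second, smaller gap in the transport from $K_0$ back to $K$. Biorthogonality and pointwise finiteness do pull back, but the decay condition must hold for \emph{all} $h\in C(K)$, not only for $h\in q^*C(K_0)$: writing $\tilde\psi_k=(\delta_{x_k}-\delta_{x_\infty})/f_k(z_k)$ with $x_k\in q^{-1}(z_k)$, you need $h(x_k)\to h(x_\infty)$ for every $h\in C(K)$, i.e. $x_k\to x_\infty$ in $K$, and convergence of $z_k$ in the metrizable quotient $K_0$ gives no such thing for arbitrarily chosen preimages. This part is repairable, since scattered compact spaces are sequentially compact --- this is exactly what the paper invokes \cite[Lemma 9]{LPP} for, and why it keeps its evaluation points in $K$ throughout --- but as written your assembly does not go through.
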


\begin{proof}
Since $K$ is scattered, we apply \cite[Theorem 12 (5)]{KLS} to show that there exists a sequence $ u_{1}, u_{2}, \dots $ of non-zero elements of $ E $ such that for each $ x \in K $, only finitely many $ n $'s satisfy $ u_{n}(x) \neq 0 $. Let us denote
$$ F_{n} = \mathrm{span} \, \{ u_{i} : i \geq n \}, \quad n \in \mathbb{N}. $$
By \cite[Theorem 11]{LPP} of Lotz, Peck and Porta, $ \overline{F_{1}} $ contains a sequence $ h_{1}, h_{2}, \dots $ which is equivalent to the canonical basis of $ c_{0} $. We can find numbers $ \varepsilon_{n} > 0 $ such that any sequence $ f_{n} \in C(K) $ satisfying $$ \Vert f_{n} - h_{i_{n}} \Vert < \varepsilon_{n} $$ for some subsequence $ h_{i_{n}} $ is also equivalent to the canonical basis of $ c_{0} $. Since $ h_{i} $ converges weakly to $ 0 $ and $ \overline{F_{n}} $ has finite codimension in $ \overline{F_{1}} $, we have $$ \mathrm{dist}(h_{i}, F_{n}) \to 0 $$ as $ i \to \infty $. We can choose a subsequence $ h_{i_{n}} $ such that $$ \mathrm{dist}(h_{i_{n}}, F_{n}) < \varepsilon_{n}.$$  For each $ n $, we pick $ f_{n} \in F_{n} $ such that $$ \Vert f_{n} - h_{i_{n}} \Vert < \varepsilon_{n}.$$

We have found a sequence $ (f_{n})_n $ in $ E $ which is equivalent to the canonical basis of $ c_{0} $ with the property that for each $ x \in K $, only finitely many $ n $'s satisfy $ f_{n}(x) \neq 0 $ (since there is $ n_{0} $ such that $ f(x) = 0 $ for each $ f \in F_{n_{0}} $). We claim that there is a subsequence of $ (f_{n})_n $ that generates a complemented copy of $ (c_{0})_{p} $ in the space  $ C_{p}(K) $.

Let $ a > 0 $ be such that $ \Vert f_{n} \Vert \geq a $ for every $ n $. For each $ n\in\mathbb{N} $, we choose $ x_{n} \in K $ such that $ |f_{n}(x_{n})| \geq a $. Let $ n_{k} $ be indices such that $ x_{n_{k}} $ converge to some $ y $ (see \cite[Lemma 9]{LPP} of Lotz, Peck and Porta), we can moreover ignore finitely many of these indices in order to have $ f_{n_{k}}(y) = 0 $ for each $ k $.

We choose a further subsequence $ (f_{n_{k_{l}}}) $ such that:

\bigskip

(i) $ f_{n_{k_{l}}}(x_{n_{k_{j}}}) = 0 $ for $ j < l $,

\bigskip

(ii) $ |f_{n_{k_{j}}}(x_{n_{k_{l}}})| < 2^{-(j+l)} a/3 $ for $ j < l $.

\bigskip

Such a subsequence can be chosen recursively, since for each $ j $, only finitely many $ k $'s satisfy $ f_{n_{k}}(x_{n_{k_{j}}}) \neq 0 $, and at the same time, $$ f_{n_{k_{j}}}(x_{n_{k}}) \to f_{n_{k_{j}}}(y) = 0 $$ as $ k \to \infty $.
For simplicity of the notation, we denote $ g_{j} = f_{n_{k_{j}}} $ and $ y_{j} = x_{n_{k_{j}}} $, so $$ |g_{j}(y_{j})| \geq a,\,\, y_{j} \to y $$ and $ g_{j}(y) = 0 $ for each $ j $, and

\bigskip

(i) $ g_{l}(y_{j}) = 0 $ for $ j < l $,

\bigskip

(ii) $ |g_{j}(y_{l})| < 2^{-(j+l)} a/3 $ for $ j < l $.

\bigskip

We put $$ g^{*}_{1} = \frac{1}{g_{1}(y_{1})} (\delta_{y_{1}} - \delta_{y}) $$ and recursively for $ l = 1, 2, \dots $,
$$ g^{*}_{l+1} = \frac{1}{g_{l+1}(y_{l+1})} (\delta_{y_{l+1}} - \delta_{y}) - \sum_{i=1}^{l} \frac{g_{i}(y_{l+1})}{g_{l+1}(y_{l+1})} g^{*}_{i}. $$
We want to show that $ g_{j} $'s and $ g^{*}_{j} $'s form a biorthogonal system.

By induction on $ l $, we check that $ g^{*}_{l}(g_{j}) $ equals $ 1 $ for $ j = l $ and $ 0 $ for $ j \neq l $. Note first that  $$g^{*}_{1}(g_{1}) = \frac{1}{g_{1}(y_{1})} (g_{1}(y_{1}) - g_{1}(y)) = 1 $$ and, by (i), $$ g^{*}_{1}(g_{j}) = \frac{1}{g_{1}(y_{1})} (g_{j}(y_{1}) - g_{j}(y)) = 0 $$ for $ j \geq 2 $. Concerning the induction step, we can write
\begin{align*}
g^{*}_{l+1}(g_{j}) & = \frac{1}{g_{l+1}(y_{l+1})} (g_{j}(y_{l+1}) - g_{j}(y)) - \sum_{i=1}^{l} \frac{g_{i}(y_{l+1})}{g_{l+1}(y_{l+1})} g^{*}_{i}(g_{j}) \\
& = \frac{1}{g_{l+1}(y_{l+1})} g_{j}(y_{l+1}) - \sum_{i=1}^{l} \frac{g_{i}(y_{l+1})}{g_{l+1}(y_{l+1})} g^{*}_{i}(g_{j}).
\end{align*}
The sum here equals to $ 0 $ if $ j > l $ and to $$ \frac{g_{j}(y_{l+1})}{g_{l+1}(y_{l+1})} $$ if $ j \leq l $. Therefore, $$ g^{*}_{l+1}(g_{l+1}) = \frac{1}{g_{l+1}(y_{l+1})} g_{l+1}(y_{l+1}) - 0 = 1.$$  For $ j < l+1 $, we have $$ g^{*}_{l+1}(g_{j}) = \frac{1}{g_{l+1}(y_{l+1})} g_{j}(y_{l+1}) - \frac{g_{j}(y_{l+1})}{g_{l+1}(y_{l+1})} = 0.$$  For $ j > l+1 $, we have $$ g^{*}_{l+1}(g_{j}) = \frac{1}{g_{l+1}(y_{l+1})} g_{j}(y_{l+1}) - 0 = 0 $$ by (i).

Further, by induction on $ l $, we show that $ \Vert g^{*}_{l} \Vert \leq 3/a $ and
$$ \Big\Vert g^{*}_{l+1} - \frac{1}{g_{l+1}(y_{l+1})} (\delta_{y_{l+1}} - \delta_{y}) \Big\Vert \leq \frac{2^{-l}}{a}. $$
Clearly $ \Vert g^{*}_{1} \Vert \leq 2/a \leq 3/a $.

Applying  (ii), we compute
\begin{align*}
\Big\Vert g^{*}_{l+1} - \frac{1}{g_{l+1}(y_{l+1})} & (\delta_{y_{l+1}} - \delta_{y}) \Big\Vert = \Big\Vert \sum_{i=1}^{l} \frac{g_{i}(y_{l+1})}{g_{l+1}(y_{l+1})} g^{*}_{i} \Big\Vert \\
 & \leq \frac{1}{a} \cdot \frac{3}{a} \sum_{i=1}^{l} |g_{i}(y_{l+1})| \leq \frac{1}{a} \cdot \frac{3}{a} \sum_{i=1}^{l} \frac{2^{-(i+l)} a}{3} \leq \frac{2^{-l}}{a}.
\end{align*}
Hence we have  $$ \Vert g^{*}_{l+1} \Vert \leq \frac{2}{a} + \frac{2^{-l}}{a} \leq \frac{3}{a}. $$

It follows that $ g^{*}_{l}(f) \to 0 $ for any $ f \in C(K) $, because
\begin{align*}
|g^{*}_{l+1}(f)| & \leq \frac{2^{-l}}{a} \Vert f \Vert + \Big| \frac{1}{g_{l+1}(y_{l+1})} \big( \delta_{y_{l+1}}(f) - \delta_{y}(f) \big) \Big| \\
 & \leq \frac{2^{-l}}{a} \Vert f \Vert + \frac{1}{a}|f(y_{l+1})-f(y)|,
\end{align*}
which tends to $ 0 $.

Finally, let $ S $ be the operator defined in the following form  $$ f \mapsto (g^{*}_{l}(f))_l.$$  Then $ S $ is a continuous linear mapping from $ C_{p}(K) $ to the space $ (c_{0})_{p} $. This is because each $ g^{*}_{l} $ is a linear combination of a finite number of Dirac measures.

Let $ T $ be the operator defined as follows $$ (z_{i})_i \mapsto \sum_{i=1}^{\infty} z_{i}g_{i}.$$  Then $ T $ is a continuous linear mapping from $ (c_{0})_{p} $ to the space $ C_{p}(K) $. Indeed, we know that $ T $ is a norm-norm embedding of $ c_{0} $ into $ C(K) $, the pointwise-pointwise continuity follows from the property that for each $ x \in K $, only finitely many $ n $'s satisfy $ f_{n}(x) \neq 0 $.

Next, we realize that $ ST $ is the identity on $ c_{0} $, since for $ n \in \mathbb{N} $, we have $$ ST(e_{n}) = S(g_{n}) = (g^{*}_{l}(g_{n}))_l = e_{n}.$$  It follows that $ T(c_{0}) $ is a complemented copy of $(c_{0})_{p} $ in $ C_{p}(K) $, which is witnessed by the projection $ P = TS$.  Indeed, $$ P^{2} = TSTS = TidS = TS = P,$$   $$ P(C(K)) = TS(C(K)) \subset T(c_{0}) $$ and $ P(T(z)) = TST(z) = T(z) $ for each $ z \in c_{0} $.
\end{proof}
Next we show  Proposition \ref{1}  which will be used to prove  Theorem \ref{MAIN}.  Recall first that a lcs $E$ admits a fundamental sequence of bounded sets if $E$ has a sequence $(S_n)_n$ of bounded sets such that every bounded set in $E$ is contained in some $S_{m}$.
\begin{proposition} \label{1} Let $E$ be an infinite-dimensional separable reflexive Banach space. Then for every non-scattered compact space $X$, the space $C_p(X)$ contains a closed subspace $F$ isomorphic to the space $E_w$ endowed with the weak topology.
Clearly, $F$ is $\sigma$-compact and has a fundamental sequence of bounded sets. \end{proposition}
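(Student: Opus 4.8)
The plan is to realise the weak topology of $E$ as a topology of \emph{pointwise} convergence on a cleverly chosen compact set, and then to transport that copy into $C_{p}(X)$ by precomposition with a continuous surjection of $X$. The whole argument hinges on choosing the compact set so that pointwise convergence on it coincides, on $E$, with weak convergence. Since $E$ is separable, the natural candidate is $K_{0}:=(B_{E^{*}},w^{*})$, the dual unit ball with its weak$^{*}$ topology, which by Alaoglu's theorem and separability is compact metrizable. The canonical evaluation map $J\colon E\to C(K_{0})$, $(Jx)(\mu)=\mu(x)$, is a linear isometric embedding; moreover, because the seminorms $|\mu(\cdot)|$ with $\mu\in B_{E^{*}}$ generate exactly $\sigma(E,E^{*})$ (scaling a seminorm does not change the topology, and $E^{*}=\bigcup_{t>0}tB_{E^{*}}$), the map $J$ is a linear homeomorphism of $E_{w}$ onto $(J(E),\tau_{p})\subset C_{p}(K_{0})$. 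So far this exhibits $E_{w}$ as a linear topological subspace of $C_{p}(K_{0})$.

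Next I would show that $J(E)$ is $\tau_{p}$-\emph{closed} in $C(K_{0})$, and this is the step where reflexivity is indispensable. If $h\in C(K_{0})$ is a pointwise limit of a net $(Jx_{\alpha})$, then from $\mu(x_{\alpha})\to h(\mu)$ one reads off that $h$ is additive and positively homogeneous on $B_{E^{*}}$, hence the restriction of a linear functional on $E^{*}$; continuity of $h$ on the compact $K_{0}$ forces boundedness, so this functional lies in $E^{**}$, and reflexivity yields $h=Jx$ for some $x\in E$. (For non-reflexive $E$ the pointwise closure would acquire elements of $E^{**}\setminus E$, so the hypothesis is used exactly here.)

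The \textbf{main obstacle} is producing a continuous surjection $\psi\colon X\to K_{0}$; once it is available, the precomposition map $\psi^{*}\colon g\mapsto g\circ\psi$ is a linear homeomorphism of $C_{p}(K_{0})$ onto the $\tau_{p}$-closed subspace of functions that are constant on the fibres of the quotient map $\psi$, so that $F:=\psi^{*}(J(E))$ is a closed copy of $E_{w}$ in $C_{p}(X)$. The difficulty is that non-scatteredness of $X$ gives, by the classical Pe\l czy\'nski--Semadeni theorem \cite{Pelczynski}, only a continuous surjection $X\twoheadrightarrow[0,1]$, and a non-scattered compactum need not surject onto an arbitrary metric compactum (for instance $[0,1]$ does not map onto the Cantor set). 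The resolution exploits the \emph{convexity} of $K_{0}$: as a compact convex metrizable subset of the locally convex space $(E^{*},w^{*})$ it is connected and locally connected, i.e.\ a Peano continuum, so by the Hahn--Mazurkiewicz theorem it is a continuous image of $[0,1]$; composing $X\twoheadrightarrow[0,1]\twoheadrightarrow K_{0}$ then yields the required $\psi$. I expect this passage from ``onto $[0,1]$'' to ``onto $K_{0}$'' to be the genuinely non-routine point.

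Finally, the closing assertions are immediate: $F\cong E_{w}$ and $E=\bigcup_{n}nB_{E}$ with each ball $nB_{E}$ weakly compact by reflexivity, so $F$ is $\sigma$-compact, and the sequence $(nB_{E})_{n}$ is a fundamental sequence of bounded sets since every weakly bounded (equivalently, norm bounded) subset of $E$ is absorbed by some $nB_{E}$.
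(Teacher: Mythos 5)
Your proof is correct and takes essentially the same route as the paper's: both realize $E_w$ via the evaluation map into $C_p(K_0)$ with $K_0=(B_{E^*},w^*)$, use reflexivity precisely to show the image is pointwise-closed, and obtain the needed surjection $X\to K_0$ by composing the Pe{\l}czy\'nski--Semadeni surjection onto $[0,1]$ with a Hahn--Mazurkiewicz map onto the Peano continuum $K_0$. The only cosmetic difference is that you verify directly that precomposition with a quotient map of compacta embeds $C_p(K_0)$ as a closed subspace of $C_p(X)$, where the paper cites \cite[Corollary 0.4.8]{Ark}.
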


\begin{proof} The compact space $X$ is non-scattered, so there exists a continuous surjection $\phi: X \to [0,1].$ Clearly, the compact metrizable space $Y=(B_{E^*}, w^*)$ is locally connected in each point, so it is a Peano continuum. Thus, by the Hahn-Mazurkiewicz Theorem \cite[Theorem 2]{Ward} there exists a continuous surjection $\psi: [0,1] \to Y.$ The continuous surjection $\psi \circ \phi: X \to Y$ is a quotient map, so $C_p(X)$ contains a closed copy of $C_p(Y)$, see \cite[Corollary 0.4.8]{Ark}. It is easy check that the linear map $S:E_w \to C_p(Y),\,\,\, x \to f_x, $ where $$f_x: Y \to \mathbb{R},\,\,\,  g \to g(x),$$ is an isomorphism onto its range.

We shall prove that $S(E_w)$ is closed in $C_p(Y)$. Let $$(x_{\alpha}) \subset E,\,\,\, h\in C_p(Y)$$ and $$Sx_{\alpha} \to_{\alpha} h$$ in $C_p(Y)$. Then $g(x_{\alpha}) \to_{\alpha} h(g)$ for every $g\in B_{E^*}.$ It follows that the map $$\hat{h}: E^* \to \mathbb{R},\,\,\, g \to \lim_{\alpha} g(x_{\alpha})$$ is well defined and linear; clearly, $\hat{h}(g)=h(g)$ for $g\in B_{E^*}$, so $\hat{h}$ is continuous on $Y$. Hence $\hat{h}$ is continuous on $(B_{E^*}, \| \cdot \|),$ so it is continuous on the set $(E^*, \|\cdot\|)$. Thus $\hat{h}\in E^{**}$. By reflexivity of $E$, there exists $x_0 \in E$ such that $h(g)=\hat{h}(g)=g(x_0)$ for every $g\in B_{E^*}$. Thus $$h=f_{x_0}=S(x_0)\in S(E_w).$$ It follows that $S(E_w)$ is closed in $C_p(Y)$.
We have shown that $C_p(X)$ contains a closed copy of $E_w$. \end{proof}

\begin{corollary}\label{2}  For every non-scattered compact space $X$, the space $C_p(X)$ contains an infinite-dimensional closed subspace $F$ without any copy of $(c_{00})_p$. \end{corollary}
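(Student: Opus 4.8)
The plan is to instantiate Proposition \ref{1} with a concrete space and then exploit the fact that possessing a fundamental sequence of bounded sets is inherited by subspaces but fails for $(c_{00})_p$. First I would take $E=\ell_2$ (any infinite-dimensional separable reflexive Banach space serves equally well) and apply Proposition \ref{1}: since $X$ is non-scattered and compact, $C_p(X)$ contains a closed subspace $F$ isomorphic to $E_w$. This $F$ is infinite-dimensional, and, as recorded in the statement of Proposition \ref{1}, it is $\sigma$-compact and carries a fundamental sequence of bounded sets, say $(S_n)_n$. It then remains only to verify that no subspace of $F$ can be linearly homeomorphic to $(c_{00})_p$, which would finish the proof since the desired $F$ has already been produced.

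The key structural observation is that the existence of a fundamental sequence of bounded sets is a linear-topological invariant which passes to arbitrary subspaces. Indeed, if $G\subset F$ carries the subspace topology, then a subset $A\subset G$ is bounded in $G$ precisely when it is bounded in $F$ (the neighbourhoods of $0$ in $G$ are the traces of those in $F$, and $A\subset G$ is absorbed by $U\cap G$ iff it is absorbed by $U$); hence $(S_n\cap G)_n$ is a fundamental sequence of bounded sets for $G$. Consequently, if $F$ contained a copy of $(c_{00})_p$, then $(c_{00})_p$ itself would admit a fundamental sequence of bounded sets. So it suffices to show that $(c_{00})_p$ does \emph{not} admit such a sequence.

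This last point I would establish by a diagonal argument. In $(c_{00})_p$ the topology is generated by the seminorms $x\mapsto |x_n|$, so a set is bounded exactly when it is bounded in each coordinate. Given any sequence $(T_n)_n$ of bounded subsets of $(c_{00})_p$, set $c_n=\sup_{x\in T_n}|x_n|<\infty$ and consider $B=\{(c_n+1)e_n : n\in\mathbb{N}\}$, where $e_n$ denotes the $n$-th unit vector. Each element of $B$ lies in $c_{00}$, and $B$ is bounded because its $m$-th coordinate is nonzero only for the single vector $(c_m+1)e_m$, giving $\sup_{x\in B}|x_m|=c_m+1<\infty$ for every $m$. On the other hand $(c_n+1)e_n\notin T_n$ for each $n$, so $B$ is contained in no $T_n$, and $(c_{00})_p$ has no fundamental sequence of bounded sets. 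Combining this with the preceding paragraph shows that $F$ contains no copy of $(c_{00})_p$, as required.

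The argument is largely routine once Proposition \ref{1} is in hand: the genuine content is the diagonal construction in the last step, while the only points demanding a little care are the clean identification of the bounded sets in $(c_{00})_p$ and the verification that the fundamental-sequence property descends to subspaces. I do not anticipate a serious obstacle, the choice of $E$ being immaterial beyond its being infinite-dimensional, reflexive and separable.
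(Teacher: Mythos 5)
Your proof is correct and follows essentially the same route as the paper: invoke Proposition \ref{1} to obtain a closed infinite-dimensional subspace $F$ with a fundamental sequence of bounded sets, note that this property is inherited by subspaces, and rule it out for $(c_{00})_p$ via a diagonal construction with scaled unit vectors, which is exactly the paper's ``direct argument.'' The only cosmetic difference is that you spell out the subspace-inheritance step explicitly (and fix $E=\ell_2$), both of which the paper leaves implicit.
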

\begin{proof} Note that  the metrizable  lcs $(c_{00})_p$ contains no fundamental sequence of bounded sets, otherwise would be normed by the  Kolmogoroff theorem, see \cite[Proposition 6.9.4]{Jarchow},  which is impossible.  A  direct argument:  For every $(\alpha_n) \in \mathbb{R}^{\mathbb{N}}$ the set $\{\alpha_ne_n: n\in \mathbb{N}\}$ is bounded in $\mathbb{R}^{\mathbb{N}}.$ For each sequence $(V_n)$ of bounded sets in $(c_{00})_p$ there exists $(\beta_n) \in \mathbb{R}^{\mathbb{N}}$ such that $\beta_ne_n \not \in V_n$ for every $n\in \mathbb{N}.$ Then $\{\beta_ne_n: n\in \mathbb{N}\}\not \subset V_m$ for any $m\in \mathbb{N}.$  Thus $(c_{00})_p$ contains no fundamental sequence of bounded sets. Using  last Proposition \ref{1}  we complete the proof. \end{proof}
\begin{proof}[Proof of Theorem \ref{MAIN}] Using both Proposition \ref{0} and  Corollary \ref{2} we  complete the proof of Theorem \ref{MAIN}.
\end{proof}

Using Theorem \ref{Main2}, Proposition \ref{1} and Corollary \ref{2} and their proofs we get the following

\begin{theorem} \label{3} For a compact space $X$ the following  are equivalent:

(1) $X$ is scattered;

(2) $C_p(X)$ contains no infinite-dimensional subspace with a fundamental sequence of bounded sets;

(3) $C_p(X)$ contains no copy of $E_w$ for any infinite-dimensional separable Banach space $E$;

(4) Every infinite-dimensional closed subspace of $C_p(X)$ contains a copy of $(c_0)_p.$ \end{theorem}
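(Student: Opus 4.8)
The plan is to make the scattered condition (1) the hub and attach (3) and (4) to it by one forward implication and one contrapositive apiece. First I would record that (1) $\Leftrightarrow$ (2) is literally the equivalence (1) $\Leftrightarrow$ (3) of Theorem \ref{Main2}, so nothing new is needed there. The finite case is trivial, since then $C_p(X)$ is finite-dimensional and (2), (3), (4) hold vacuously while $X$ is scattered; hence I would assume $X$ infinite throughout and use the cited theorems freely.

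For the forward directions I would argue as follows. For (2) $\Rightarrow$ (3) the key observation is that for every infinite-dimensional separable Banach space $E$ a subset of $E_w$ is bounded precisely when it is norm-bounded (by the uniform boundedness principle), so the closed balls $(nB_{E})_n$ form a fundamental sequence of bounded sets of $E_w$; as this property is invariant under linear homeomorphisms, any copy of $E_w$ inside $C_p(X)$ would be an infinite-dimensional subspace carrying a fundamental sequence of bounded sets, contradicting (2). Thus $C_p(X)$ contains no such copy, which is exactly (3). For (1) $\Rightarrow$ (4) I would simply invoke Theorem \ref{MAIN}: when $X$ is scattered, every $\tau_p$-closed infinite-dimensional subspace of $C_p(X)$ even contains a \emph{complemented} copy of $(c_0)_p$, in particular a copy, so (4) holds.

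The two reverse implications I would prove contrapositively, assuming $X$ non-scattered. For $\neg$(1) $\Rightarrow$ $\neg$(3) I would apply Proposition \ref{1} with $E=\ell_2$, an infinite-dimensional separable reflexive Banach space: it yields a closed copy of $(\ell_2)_w$ in $C_p(X)$, so (3) fails. For $\neg$(1) $\Rightarrow$ $\neg$(4) I would use Corollary \ref{2} to obtain an infinite-dimensional $\tau_p$-closed subspace $F$ of $C_p(X)$ containing no copy of $(c_{00})_p$; since $(c_{00})_p$ sits inside $(c_0)_p$ as a linear subspace with the induced topology, every copy of $(c_0)_p$ contains a copy of $(c_{00})_p$, so $F$ contains no copy of $(c_0)_p$ and (4) fails. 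Combining everything, (1), (2), (3) are equivalent and (1), (4) are equivalent, hence all four are equivalent.

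The routine parts are the citations; the only genuinely delicate points I would state carefully are the two transfer observations, namely that $E_w$ always carries a fundamental sequence of bounded sets (this is what couples (2) and (3)) and that a topological copy of $(c_0)_p$ automatically contains a topological copy of $(c_{00})_p$ (this is what lets Corollary \ref{2}, phrased via $(c_{00})_p$, refute (4)). The main conceptual obstacle is upstream in Theorem \ref{MAIN}, whose hard direction (1) $\Rightarrow$ (4) rests on the construction of Proposition \ref{0}; here I am content to cite it, but it is worth flagging that, unlike the other three implications, (1) $\Rightarrow$ (4) cannot be reduced to a soft bounded-set or fundamental-sequence argument and genuinely needs that construction.
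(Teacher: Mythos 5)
Your proof is correct, and its global architecture coincides with the paper's: (1) is the hub, with Theorem \ref{Main2} giving (1) $\Leftrightarrow$ (2), Theorem \ref{MAIN} (i.e.\ Proposition \ref{0}) giving (1) $\Rightarrow$ (4), and the contrapositives of (3) $\Rightarrow$ (1) and (4) $\Rightarrow$ (1) coming from Proposition \ref{1} (with $E=\ell_2$) and Corollary \ref{2}, respectively. The one place where you genuinely diverge is the forward implication toward (3). The paper proves (1) $\Rightarrow$ (3) directly: if $X$ is scattered then $C_p(X)$ is Fr\'echet-Urysohn by \cite[Theorem III.1.2]{Ark}, hence so is any embedded copy of $E_w$; since Fr\'echet-Urysohn locally convex spaces are bornological \cite[Lemma 14.4.3]{KKPS}, the weak and norm topologies of $E$ would coincide, forcing $E$ to be finite-dimensional. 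You instead prove (2) $\Rightarrow$ (3) by the softer observation that $E_w$ always carries a fundamental sequence of bounded sets (the balls $nB_E$, because weakly bounded sets are norm bounded), so a copy of $E_w$ would violate (2). Both arguments are valid. Yours is more elementary --- it needs no Fr\'echet-Urysohn or bornologicity machinery and essentially just makes explicit the remark the paper attaches to Proposition \ref{1} (``$F$ \dots\ has a fundamental sequence of bounded sets'') --- but it routes the implication through the cited Theorem \ref{Main2}, whereas the paper's argument makes the equivalence (1) $\Leftrightarrow$ (3) self-contained and gives a topological reason (Fr\'echet-Urysohnness) why such copies cannot exist. Two further points you state carefully that the paper leaves implicit are both correct: the reduction to infinite $X$ (Theorems \ref{Main2} and \ref{MAIN} assume $X$ infinite, and the finite case is vacuous), and the transfer observation that any copy of $(c_0)_p$ contains a copy of $(c_{00})_p$ with the induced topology, which is exactly what lets Corollary \ref{2}, phrased via $(c_{00})_p$, refute (4).
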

We provide a short proof of the equivalence (1) $\Leftrightarrow$ (3).  If $X$ is scattered,  $C_p(X)$ is Fr\'echet-Urysohn \cite[Theorem III.1.2]{Ark}.   Recall a topological space $W$ is  Fr\'echet-Urysohn if for each  $A\subset W$ and  $x\in\overline{W}$ there exists a sequence in $A$ converging to $x$. Assume  $E$ is an infinite-dimensional  Banach space and $E_w$ is embedded into $C_p(X)$. Then $E_w$ is Fr\'echet-Urysohn, as well. Since Fr\'echet-Urysohn lcs are bornological  \cite[Lemma 14.4.3]{KKPS}, the space $E_w$ is bornological (i.e. every absolutely convex and bornivorous subset of $E_w$ is a neighbourhood of zero), what implies that the norm topology of $E$  and the weak topology of $E_w$ coincide. Hence $E$  is finite-dimensional, a contradiction. The converse (3) $\Rightarrow$ (1) follows from Proposition \ref{1}.
 \begin{corollary}\label{coro4}
If X is an infinite compact scattered space and $C_p(Y)$ is an infinite-dimensional closed subspace of $C_p(X)$ for some Tychonoff  space $Y$, then $Y$ is compact and scattered and $C_p(Y)$ contains a copy of $(c_0)_p$ complemented in $C_p(X)$.
\end{corollary}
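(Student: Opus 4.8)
The plan is to split the conclusion into its two assertions — that $Y$ is compact and scattered, and that $C_p(Y)$ contains a copy of $(c_0)_p$ complemented in $C_p(X)$ — and to note that the second is immediate once the first is established. Indeed, $C_p(Y)$ is by hypothesis an infinite-dimensional closed vector subspace of $C_p(X)$ with $X$ infinite, compact and scattered, so Theorem \ref{MAIN} applied to $X$ directly furnishes inside $C_p(Y)$ a copy of $(c_0)_p$ that is complemented in the whole space $C_p(X)$. Thus the substantive part is to prove that $Y$ is compact and scattered, and I expect compactness to be the main obstacle.

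For compactness I would extract two intrinsic properties of $Y$ from the assumption that $C_p(Y)$ is a closed subspace of $C_p(X)$. First, since $X$ is compact and scattered, $C_p(X)$ is Fréchet--Urysohn by \cite[Theorem III.1.2]{Ark}; as the Fréchet--Urysohn property is hereditary, the closed subspace $C_p(Y)$ is Fréchet--Urysohn as well. By the Gerlits--Nagy theorem this means $Y$ has property $\gamma$, and every $\gamma$-space is Lindelöf, hence, being Tychonoff, realcompact. Second, $X$ being compact is pseudocompact, so $C_p(X)$ contains no closed linearly homeomorphic copy of $\mathbb{R}^{\mathbb{N}}$; this absence passes to the closed subspace $C_p(Y)$, and by the standard $C_p$-theoretic characterization of pseudocompactness in terms of closed copies of $\mathbb{R}^{\mathbb{N}}$ it follows that $Y$ is pseudocompact. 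Since a Tychonoff space that is simultaneously realcompact and pseudocompact is compact, $Y$ is compact.

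With $Y$ known to be compact, scatteredness is a short argument by contradiction using the machinery already developed in the paper. If $Y$ were a non-scattered compact space, then Proposition \ref{1} (equivalently Corollary \ref{2}) would produce inside $C_p(Y)$ an infinite-dimensional closed subspace $F$ isomorphic to $E_w$ for a separable reflexive Banach space $E$, and such an $F$ is $\sigma$-compact with a fundamental sequence of bounded sets. As $C_p(Y)$ is closed in $C_p(X)$, this $F$ would be an infinite-dimensional closed $\sigma$-compact subspace of $C_p(X)$, contradicting Theorem \ref{Main2}(2) for the scattered compact space $X$. Hence $Y$ is scattered, which together with the opening remark completes the proof.

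I expect the compactness step, and specifically the passage to pseudocompactness, to be the delicate point: the transfer of ``no closed copy of $\mathbb{R}^{\mathbb{N}}$'' relies on the correct reflection of pseudocompactness into $C_p$-theory, and one should verify that the embedding of $C_p(Y)$ being merely a closed linear-topological subspace — not one induced by a map between $X$ and $Y$ — still allows these reflection arguments. It does, because being Fréchet--Urysohn, containing no closed copy of $\mathbb{R}^{\mathbb{N}}$, and possessing an infinite-dimensional closed $\sigma$-compact subspace are all properties of $C_p(Y)$ as a topological vector space, and are therefore inherited from $C_p(X)$ through the closed embedding.
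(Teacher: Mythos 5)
Your proof is correct; it reaches the same conclusions in the same order as the paper, but it replaces the paper's citations by explicit arguments, so the routes genuinely differ. The paper's proof is essentially two quotations plus Theorem \ref{MAIN}: compactness of $Y$ is taken directly from \cite[Theorem 3.11]{KL}, and scatteredness then follows from \cite[Theorem III.1.2]{Ark}, since $C_p(X)$ is Fr\'echet--Urysohn ($X$ being compact and scattered), this property is hereditary, and a compact space whose function space is Fr\'echet--Urysohn is scattered. Your compactness argument --- Gerlits--Nagy to get the $\gamma$-property of $Y$, hence Lindel\"ofness and realcompactness, combined with pseudocompactness of $Y$ via the classical fact that a Tychonoff space is pseudocompact if and only if its $C_p$-space has no closed linear copy of $\mathbb{R}^{\mathbb{N}}$ (such a copy inside $C_p(Y)$ would stay closed in $C_p(X)$, which is impossible because $C_p(X)$ is covered by countably many pointwise bounded sets while $\mathbb{R}^{\mathbb{N}}$ is not), and finally Hewitt's theorem --- is sound, and it has the merit of making the corollary independent of \cite{KL}. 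Your scatteredness step via Proposition \ref{1} and Theorem \ref{Main2} is also valid ($E_w$ is $\sigma$-compact and its closed copy in $C_p(Y)$ remains closed in $C_p(X)$), but it is heavier than necessary: at that point you already know that $C_p(Y)$ is Fr\'echet--Urysohn and $Y$ is compact, so \cite[Theorem III.1.2]{Ark} yields scatteredness in one line, which is exactly the paper's route. Your observation that the complemented copy of $(c_0)_p$ requires no information about $Y$ at all --- Theorem \ref{MAIN} applies to $C_p(Y)$ simply as a closed infinite-dimensional subspace of $C_p(X)$ --- is correct and agrees with how the paper uses Theorem \ref{MAIN}.
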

\begin{proof}
The space $Y$ is compact by \cite[Theorem 3.11]{KL}, and then  $Y$ is scattered by  \cite[Theorem 3.1.2]{Ark}, and we apply Theorem \ref{MAIN}.
\end{proof}
Recall that a lcs  $E$ is \emph{quasibarrelled}, if every bornivorous absolutely convex closed set in  $E$ is a neighbourhood of zero. Recall also that a  lcs $E$ is called \emph{strongly distinguished} \cite{KS} if the strong dual $E'_{\beta}$ of $E$ carries the finest locally convex topology.

In order to prove Proposition \ref{pro3} we need the following Proposition \ref{pro5} which uses some ideas  contained in \cite[Theorem 1.2]{Kakol-Leiderman2}.
\begin{proposition} \label{pro5} Any polar linear map $T$ from a strongly distinguished lcs $E$ onto a quasibarrelled lcs $F$ is open.\end{proposition}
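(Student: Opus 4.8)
The plan is to show that $T$ maps each member of the polar base $\mathcal{U}$ onto a neighbourhood of zero in $F$; since $\mathcal{U}$ is a base of $0$-neighbourhoods and $T$ is onto, this gives openness. The engine is the quasibarrelledness of $F$: a closed, absolutely convex, bornivorous set in $F$ is automatically a $0$-neighbourhood. So for a fixed $U\in\mathcal{U}$ I would choose an absolutely convex closed $0$-neighbourhood $W\subseteq U$ in $E$ and prove that the closed absolutely convex hull $\overline{T(W)}$ (which equals the bipolar $(T(W))^{\circ\circ}$) is bornivorous, hence a $0$-neighbourhood of $F$. Then, because $T(U)$ is closed and contains $T(W)$, one gets $\overline{T(W)}\subseteq T(U)$, so $T(U)$ contains a $0$-neighbourhood and is itself one. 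This is exactly where the hypothesis that $T(U)$ is closed is used, and it is what upgrades ``the closure is a neighbourhood'' to ``$T(U)$ is a neighbourhood''; note also that passing to an absolutely convex $W$ is what lets me invoke quasibarrelledness, since $U$ itself need not be absolutely convex.

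The heart of the matter is the bornivorousness of $\overline{T(W)}$, and I would handle it by dualizing. A closed absolutely convex set $A\subseteq F$ is bornivorous precisely when its polar $A^{\circ}$ is bounded in $F'_{\beta}$: the polars of bounded subsets of $F$ form a base of $0$-neighbourhoods of $F'_{\beta}$, and $A$ absorbs a bounded set $B$ if and only if $B^{\circ}$ absorbs $A^{\circ}$. Taking $A=\overline{T(W)}$ we have $A^{\circ}=(T(W))^{\circ}=(T')^{-1}(W^{\circ})$, where $T'\colon F'\to E'$ is the adjoint. So the whole problem reduces to showing that $(T')^{-1}(W^{\circ})$ is bounded in $F'_{\beta}$.

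Here the two hypotheses combine. Since $T$ is onto, $T'$ is injective. Since $W$ is a $0$-neighbourhood, $W^{\circ}$ is equicontinuous and therefore bounded in $E'_{\beta}$. Now strong distinguishedness enters: $E'_{\beta}$ carries the finest locally convex topology, in which every bounded set is finite-dimensional, i.e. contained in and bounded inside a finite-dimensional subspace. Thus $W^{\circ}$ lies in a finite-dimensional subspace $M\subseteq E'$ and is bounded there. Consequently $(T')^{-1}(W^{\circ})$ is contained in $N=(T')^{-1}(M)$, which is finite-dimensional because $T'$ is injective; on the finite-dimensional space $N$ the map $T'$ is a linear homeomorphism onto its image, so it pulls the bounded set $W^{\circ}\cap T'(N)$ back to a bounded subset of $N$. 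Finally, a bounded subset of a finite-dimensional subspace of $F'_{\beta}$ is bounded in $F'_{\beta}$, since the subspace topology there is Euclidean and the inclusion is continuous. This delivers the boundedness of $(T')^{-1}(W^{\circ})$.

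I expect the main obstacle to be this bornivorousness step, and within it the clean use of the structural fact that the equicontinuous set $W^{\circ}$, being bounded in the finest locally convex topology, must be finite-dimensional, together with the control of its preimage under the injective adjoint $T'$. Once $(T')^{-1}(W^{\circ})$ is known to be bounded in $F'_{\beta}$, the polar calculus and quasibarrelledness finish the argument routinely, and the closedness of $T(U)$ supplies the final inclusion $\overline{T(W)}\subseteq T(U)$.
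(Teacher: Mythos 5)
Your argument is correct, and its outer skeleton matches the paper's: for $U$ in the polar base, pick an absolutely convex $0$-neighbourhood $W\subseteq U$, show that the closed absolutely convex set $\overline{T(W)}$ is bornivorous, invoke quasibarrelledness of $F$ to make it a $0$-neighbourhood, and use the closedness of $T(U)$ to get $\overline{T(W)}\subseteq T(U)$. Where you genuinely diverge is in \emph{how} bornivorousness is established. The paper works from the primal side: it uses strong distinguishedness through the fact that the finest locally convex topology is \emph{inherited by subspaces}, so the adjoint $T^{*}\colon F'_{\beta}\to T^{*}(F'_{\beta})$ is open onto its range; then, for each bounded $A\subseteq F$, it produces a bounded $B\subseteq E$ with $B^{\circ}\cap T^{*}(F'_{\beta})\subseteq T^{*}(A^{\circ})$, deduces $[T(B)]^{\circ}\subseteq A^{\circ}$, and concludes via the bipolar theorem that $A\subseteq\overline{T(B)}\subseteq tT(U_{s})$, so that every $T(U_{s})$ is itself bornivorous. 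You work from the dual side: you characterize bornivorous closed absolutely convex sets as those with $\beta(F',F)$-bounded polars, and you exploit a \emph{different} structural property of the finest locally convex topology, namely that its bounded sets are finite-dimensional and bounded there; combined with injectivity of $T'$ and elementary finite-dimensional reasoning, this makes $(T')^{-1}(W^{\circ})$ strongly bounded. In effect you prove that the inverse of the adjoint carries equicontinuous sets to strongly bounded sets, which is a weaker (bounded rather than topological) form of the paper's openness of $T^{*}$, but it suffices. Your route buys a more elementary, self-contained core step avoiding the openness-of-the-adjoint argument; the paper's route needs no classification of bounded sets in the finest topology and yields the slightly stronger intermediate statement that each $T(U_{s})$, not merely $\overline{T(W)}$, is bornivorous.
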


\begin{proof} Let $(U_s)_{s\in S}$ be a base of neighbourhoods of zero in $E$ such that $V_s=T(U_s)$ is closed in $F$ for every $s\in S.$ We shall prove that the set $V_s$ is bornivorous in $F$ for every $s\in S.$ Let $s\in S.$

The adjoint map $T^{*}: F'_{\beta}\rightarrow E'_{\beta}$ is injective and open onto its range $T^*(F'_{\beta})$, since $E'_{\beta}$ carries the finest locally convex topology and the finest locally topology is inherited by vector subspaces.

Let $A$ be a bounded subset of $F$. Then the polar $A^\circ$ of $A$ is a neighbourhood of zero in $F'_{\beta}$, so  $T^*(A^\circ)$ is open in $T^*(F'_{\beta})$.

 Thus there exists an absolutely convex bounded subset $B$ of $E$ such that $B^\circ \cap T^*(F'_{\beta}) \subset T^*(A^\circ).$ Then we have $$[T(B)]^{\circ}\subset (T^{*})^{-1}(B^{\circ})=(T^{*})^{-1}[B^{\circ}\cap T^{*}(F'_{\beta})]\subset A^{\circ}.$$ By the bipolar theorem we get $$A\subset A^{\circ\circ}\subset T(B)^{\circ\circ}\subset\overline{T(B)}.$$ The set $B$ is bounded in $E$, so $B \subset tU_s$ for some $t\in \mathbb{R}$. $\\$ Then $T(B) \subset tT(U_s)=tV_s,$ so $A\subset \overline{T(B)} \subset \overline{tV_s} = tV_s.$ $\\$ It follows that $V_s$ is bornivorous in $F$ for every $s\in S.$

Let $s\in S.$ The set $U_s$ contains an absolutely convex neighbourhood of zero $W_s$  in $E$ and $W_s$ contains $U_r$ for some $r\in S$. Then $$V_r=T(U_r)\subset T(W_s) \subset \overline{T(W_s)}\subset \overline{T(U_s)}=\overline{V_s}=V_s,$$ so $\overline{T(W_s)}$ is a bornivorous absolutely convex closed set in $F$. Thus $\overline{T(W_s)}$ is a neighbourhood of zero in $F$, so $V_s$ is a neighbourhood of zero in $F$. It follows that the map $T$ is open. \end{proof}
For any $\Delta$-space $X$, the space $C_p(X)$ is strongly distinguished (by \cite[Theorem 6]{KS}) and quasibarrelled (by \cite[Corollary 11.7.3]{Jarchow}); any quotient of a quasibarrelled lcs is quasibarrelled, so we get the following.
\begin{corollary}\label{pro1}
Let $X$ be a  $\Delta$-space. A polar map $T$ from $C_p(X)$ onto an lcs $F$ is open if and only if $F$ is quasibarrelled.
\end{corollary}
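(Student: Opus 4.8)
The plan is to read off the corollary directly from the two structural facts recorded just above the statement, namely that for a $\Delta$-space $X$ the space $C_p(X)$ is strongly distinguished (through Fact \ref{xx} and \cite[Theorem 6]{KS}) and quasibarrelled (\cite[Corollary 11.7.3]{Jarchow}), together with the permanence of quasibarrelledness under quotients. Each of the two implications will use exactly one of these inputs, so the whole argument is a short synthesis rather than a fresh computation.

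First I would treat sufficiency. Assuming $F$ is quasibarrelled, the map $T$ is then a polar linear map from the strongly distinguished space $C_p(X)$ onto a quasibarrelled space, so Proposition \ref{pro5} applies verbatim with $E = C_p(X)$ and yields that $T$ is open. No further argument is required in this direction; it is a direct invocation of Proposition \ref{pro5} once the strong distinguishedness of $C_p(X)$ has been noted.

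For necessity I would argue that openness forces $F$ to be a quotient. Since $T$ is a continuous linear surjection which is moreover open, the canonical factorisation $C_p(X)/\ker T \to F$ is a topological isomorphism, exhibiting $F$ as (a space isomorphic to) a quotient of $C_p(X)$. As $C_p(X)$ is quasibarrelled and quasibarrelledness passes to quotients, $F$ inherits the property, which closes the equivalence.

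I do not expect a genuine obstacle here: all the weight has already been shifted into Proposition \ref{pro5} and the cited permanence properties, and the corollary is essentially their immediate combination. The only point meriting a line of care is the identification of $F$ with the topological quotient $C_p(X)/\ker T$ in the necessity direction, so that quotient-stability of quasibarrelledness may legitimately be applied; but this is just the standard open-mapping description of quotient maps for topological vector spaces.
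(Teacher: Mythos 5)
Your proof is correct and follows exactly the paper's intended argument: sufficiency is Proposition \ref{pro5} applied with $E = C_p(X)$ strongly distinguished (Fact \ref{xx}, \cite[Theorem 6]{KS}), and necessity uses that an open continuous surjection realizes $F$ as a topological quotient of the quasibarrelled space $C_p(X)$, so quotient-stability of quasibarrelledness finishes the proof. This is precisely the synthesis the paper records in the sentence preceding the corollary, so there is nothing to add.
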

 The unit segment $[0,1]$, as well as $[0,\omega_1]$, are not $\Delta$-spaces (see Fact \ref{xx}), and a challenging problem of whether
 a non-open continuous linear polar surjection $T : C_p[0,1] \rightarrow C_p[0,1]$ exists, remains unsolved, see  \cite{Kakol-Leiderman2} and \cite{LLP}.
 Note (what will be used below) that if $E$ is a Fr\'echet-Urysohn lcs, every subspace of $E$ enjoys the same property, and every Fr\'echet-Urysohn lcs is bornological, hence quasibarrelled,
 see \cite[Lemma 14.4.3]{KKPS}.
\begin{proof}[Proof of Proposition \ref{pro3}] (1) If $Y$ is a compact scattered space, then $C_p(Y)$ is a Fr\'echet-Urysohn space, see \cite[Theorem III. 1.2]{Ark}. As the Fr\'echet-Urysohn property is inherited by subspaces, the image $T(C_p(X))$  is also Fr\'echet-Urysohn, so it  must be quasibarrelled by \cite[Lemma 14.4.3]{KKPS}. Assume that $T$ is an embedding. Then $C_p(X)$ and $T(C_p(X))$ are isomorphic. Hence  $C_p(X)$ is also Fr\'echet-Urysohn. This implies that $X$ is scattered. For the converse, assume that $X$ is scattered. Then $X$ is a $\Delta$-space, see  Fact \ref{xx}. Assume  $T$ is semi-embedding.   Finally we apply Corollary \ref{pro1} to get that $T$ is embedding.

(2) Since $X$ is scattered, $X$ is zero-dimensional, and there exists in $X$ an infinite sequence $x_n \rightarrow x$. Set $Y=\{x_n:n\in\mathbb{N}\}\cup\{x\}$.  It is known that then  $Y$ is a retraction of $X$, i.e. there exists a continuous map $r: X\rightarrow Y$ such that $r(y)=y$ for all $y\in Y$. Indeed, we embed $X$ into some Cantor cube $D^W$. Hence we look at  $Y$ as a subspace of $X$, and the latter space is a subspace of $D^W$. By \cite[Theorem 2]{Haydon} the space $D^W$ is $AE(0)$-space, so there exists a continuous retraction   $r: D^W\rightarrow Y$. The restriction map $r|X$ is a retraction from $X$ onto $Y$.
The function $q: Y \to X$ with $q(x)=x$ and $q(x_n)=x_{n+1}$ for every $n\geq 1$ is continuous. The map $h=q\circ r: X \to X$ is continuous and $\{h^k(x_1): k\geq 0\}=\{x_k: k\geq 1\}$, so the orbit of $h$ at some point is infinite. Let $\lambda \in {\mathbb R}$ with $|\lambda|>1.$

By \cite[Proposition 2.1]{LLP}, the map $T: C_p(X)\rightarrow C_p(X), Tf=\lambda f+f\circ h$ is linear, continuous, non-open and surjective. Note that the map $T$ is injective. Indeed, let $f\in C(X)$ with $Tf=0$. Then $f(h(z))=-\lambda f(z)$ for every $z\in X.$ Hence, by induction we get $$f(h^k(z))=(-\lambda)^kf(z)$$ for every $k\geq 0, z\in X.$ Thus $\sup_{k\geq 0} |\lambda|^k|f(z)|\leq \|f\|_{\infty}<\infty,$ so $f(z)=0$ for every $z\in X.$ Hence $f=0$, so $T$ is injective.  Now assume that $X$ is Eberlein compact. Then by part (1) we deduce that $T$ is not semi-embedding.  The proof of part (2) is completed.
\end{proof}
\begin{proof}[Proof of Corollary \ref{coro7}]
By \cite[Theorem 3.10, Example 3.17]{KLe} the space $X$ is a $\Delta$-space which is  scattered separable, and Corollary \ref{pro1} applies. Note that $X$ is not Eberlein \cite[p.16]{Ark}, since separable Eberlein compact spaces are metrizable \cite[Theorem III.3.6]{Ark}, but the Isbell-Mr\'owka space is not metrizable.
\end{proof}
\section{Proof of Theorem \ref{MAIN3} and Corollary \ref{MAIN2}}
\begin{proof}[Proof of Theorem \ref{MAIN3}]
 $(2) \Rightarrow (1)$ Let $T$ be an isomorphism from $(c_{00}(\Gamma))_p$ to $C_p(X)$. Let $f_{\gamma}=Te_{\gamma}$ for $\gamma \in \Gamma.$ Then $(f_{\gamma}) \subset (C_p(X)\setminus \{0\}).$ For every $(t_{\gamma})\in \mathbb{R}^{\Gamma}$ we have $t_{\gamma}e_{\gamma} \to_{\gamma} 0$ in $(c_{00}(\Gamma))_p$ and $t_{\gamma}f_{\gamma} \to_{\gamma} 0$ in $C_p(X)$.

 Thus for every $x\in X$ and every $(t_{\gamma})\in \mathbb{R}^{\Gamma}$ we deduce  $t_{\gamma}f_{\gamma}(x) \to_{\gamma} 0$. It follows that $(f_{\gamma}(x)) \in c_{00}(\Gamma)$ for every $x\in X.$

Thus the family $$\{U_{\gamma}: \gamma \in \Gamma\}:= \{f_{\gamma}^{-1}(\mathbb{R} \setminus \{0\}): \gamma \in \Gamma\}$$ of non-empty open subsets of $X$ is point-finite, so for every $W\subset X$ the set $\Gamma_W:=\{\gamma \in \Gamma: U_{\gamma}=W\}$ is finite. Since $\bigcup \{\Gamma_W: W\subset X\}=\Gamma$,  the set $${\mathcal W}:= \{W\subset X: \Gamma_W \neq \emptyset\}$$ is uncountable and $|{\mathcal W}|=|\Gamma|$. Clearly, $\Gamma_W \cap \Gamma_V=\emptyset,$ if $W\neq V.$ Let $\gamma(W) \in \Gamma_W$ for $W\in {\mathcal W}$. Then the family $\{U_{\gamma(W)}: W \in {\mathcal W}\}$ is uncountable, so the point-finite family ${\mathcal U}=\{U_{\gamma}: \gamma \in \Gamma\}$ is uncountable and $|{\mathcal U}|=|\Gamma|$. By  \cite[Lemma 4.2]{Rosenthal}, the space $X$ does not satisfy the countable chain condition, so there exists an uncountable family of pairwise disjoint non-empty open subsets of $X$.

$(1) \Rightarrow (3)$ Let $\{V_{\gamma}: \gamma \in \Gamma\}$ be a family of pairwise disjoint non-empty open subsets of $X$ and let $x_{\gamma}\in V_{\gamma}$ for $\gamma \in \Gamma.$ For every $\gamma \in \Gamma$ there exists a continuous function $f_{\gamma}: X \to [0,1]$ such that $f_{\gamma}(x_{\gamma})=1$ and $f_{\gamma}(x)=0$ for each $x\in (X\setminus V_{\gamma})$.

Let $t=(t_{\gamma}) \in c_0(\Gamma)$. The function $g_t: X\to \mathbb{R}, g_t(x)=\sum_{\gamma \in \Gamma} t_{\gamma} f_{\gamma}(x)$ is well defined. We shall prove that $g_t$ continuous.

Let $\varepsilon >0$. The set $M_{\varepsilon}=\{\gamma \in \Gamma: |t_{\gamma}|\geq \varepsilon\}$ is finite and for $x\in (X\setminus\bigcup_{\gamma \in \Gamma} V_{\gamma})$ we have $g_t(x)=0.$ Thus we have $$\{x\in X: |g_t(x)|\geq \varepsilon \}= \bigcup_{\gamma \in \Gamma} \{x\in V_{\gamma}: |g_t(x)|\geq \varepsilon\}=$$ $$\bigcup_{\gamma \in \Gamma} \{x\in V_{\gamma}: |t_{\gamma}| f_{\gamma}(x)\geq \varepsilon\}=\bigcup_{\gamma\in M_{\varepsilon}} \{x\in X: f_{\gamma}(x)\geq \varepsilon/|t_{\gamma}|\},$$ so the set
$\{x\in X: |g_t(x)|\geq \varepsilon \}$ is closed.

Hence the set $g_t^{-1}((-\varepsilon, \varepsilon))=\{x\in X: |g_t(x)|< \varepsilon \}$ is open.

Let $s\in (\mathbb{R}\setminus \{0\})$ and $\varepsilon \in (0, |s|).$ Then $0\not \in (s-\varepsilon, s+\varepsilon)$, so $$g_t^{-1}((s-\varepsilon, s+\varepsilon))= \bigcup_{\gamma \in \Gamma} \{x\in V_{\gamma}: g_t(x)\in (s-\varepsilon, s+\varepsilon)\}=$$ $$\bigcup_{\gamma \in \Gamma} \{x\in V_{\gamma}: t_{\gamma}f_{\gamma}(x)\in (s-\varepsilon, s+\varepsilon)\}=\bigcup_{\gamma \in \Gamma} (t_{\gamma}f_{\gamma})^{-1} ((s-\varepsilon, s+\varepsilon)).$$ Thus the set $g_t^{-1}((s-\varepsilon, s+\varepsilon))$ is open.

It follows that the function $g_t$ is continuous.

Clearly, $F:=\{g_t: t\in c_0(\Gamma) \}$ is a linear subspace of $C_p(X)$. We shall prove that $F$ is isomorphic to $(c_0(\Gamma))_p$. The linear map $$T: (c_0(\Gamma))_p \to F, t=(t_{\gamma}) \to g_t=\sum_{\gamma \in \Gamma} t_{\gamma} f_{\gamma}$$ is continuous. Indeed, let $k\in \mathbb{N}, y_1, \ldots, y_k \in X, \varepsilon >0$ and $$U=\{ f\in F: |f(y_i)| < \varepsilon \; \mbox{for}\; 1 \leq i \leq k \}.$$ Let $m\in \mathbb{N}$ with $m> \varepsilon^{-1}$. For some finite set $A\subset \Gamma$ we have $$\{ y_1, \ldots, y_k\} \subset \bigcup_{\gamma \in A}V_{\gamma} \cup \Big( X\setminus \bigcup_{\gamma \in \Gamma} V_{\gamma} \Big).$$ Clearly, the sets $$W_{B, l}=\{ (t_{\gamma}) \in c_0(\Gamma): \max_{\gamma \in B} |t_{\gamma}| <l^{-1} \},$$ where $B$ is a finite subset of $\Gamma$ and $l\in \mathbb{N}$ form a base of neighbourhoods of zero in $(c_0(\Gamma))_p$.

We have $$T(W_{A,m})= \Big\{ \sum_{\gamma \in \Gamma} t_{\gamma} f_{\gamma}: (t_{\gamma}) \in c_0(\Gamma), \max_{\gamma \in A} |t_{\gamma}| <m^{-1} \Big\} \subset U.$$ Indeed, let $t\in W_{A, m}$ and $1 \leq i \leq k.$ If $y_i \in \bigcup_{\gamma \in A} V_{\gamma}$, then $y_i \in V_{\gamma}$ for some $\gamma \in A$, so $|g_t(y_i)|=|t_{\gamma}|f_{\gamma}(y_i)\leq |t_{\gamma}| < m^{-1} < \varepsilon$; if $y_i \in (X\setminus \bigcup_{\gamma \in \Gamma} V_{\gamma})$, then $|g_t(y_i)|=0< \varepsilon$. Thus $T(t)=g_t\in U.$ Hence $T(W_{A, m})\subset U.$
It follows that $T$ is continuous.

Let $l\in \mathbb{N}$ and let $B$ be a finite subset of $\Gamma$. Let $\varepsilon \in (0, l^{-1}).$ Let $$W=\{ f\in F: |f(x_{\gamma})| < \varepsilon\; \mbox{for}\; \gamma \in B\}.$$ Let $f\in W.$ Then $f=g_t$ for some $t=(t_{\gamma}) \in c_0(\Gamma)$ with $|t_{\gamma}| < \varepsilon <l^{-1}$ for $\gamma \in B$; so $f\in T(W_{B,l})$. Thus $W\subset T(W_{B, l});$ so $T$ is open.

It follows that $F$ is isomorphic to $(c_0(\Gamma))_p$.

\smallskip
 Now we shall prove that the subspace $F$ of $C_p(X)$ is closed.

Let $(h_s)_{s\in S} \subset F, h \in C_p(X)$ and $h_s \to_s h$ in $C_p(X)$. Let $s\in S$. Then  $$h_s=g_{t_s}=\sum_{\gamma \in \Gamma} t_{s, \gamma} f_{\gamma}$$ for some $t_s=(t_{s, \gamma})\in c_0(\Gamma)$. Put $\beta_{\gamma}=h(x_{\gamma}), \gamma \in \Gamma.$

Let $\gamma \in \Gamma.$ Then $$t_{s, \gamma} =t_{s, \gamma}f_{\gamma}(x_{\gamma})=h_s(x_{\gamma}) \to_s h(x_{\gamma})=\beta_{\gamma},$$ so $h_s(x)= t_{s, \gamma}f_{\gamma}(x) \to_s \beta_{\gamma}f_{\gamma}(x)$ for every $x\in V_{\gamma}.$ Thus $h(x)=\beta_{\gamma}f_{\gamma}(x)$ for every $x\in V_{\gamma}, \gamma \in \Gamma.$ Hence $$h(x)=\sum_{\gamma \in \Gamma} \beta_{\gamma}f_{\gamma}(x)$$ for $x\in X.$

We prove that $(\beta_{\gamma})\in c_0(\Gamma).$ Let $\varepsilon>0.$ $\\$ Put $$P_{\varepsilon}=\{\gamma \in \Gamma: |\beta_{\gamma}|\geq \varepsilon\}.$$ The function $h$ is continuous, so the set $$D_{\varepsilon}=\{x\in X: |h(x)|\geq \varepsilon \}$$ is closed, hence compact. We have $$D_{\varepsilon}=\bigcup_{\gamma \in \Gamma} \{x\in V_{\gamma}: |h(x)|\geq \varepsilon\}=\bigcup_{\gamma \in \Gamma} \{x\in V_{\gamma}: |\beta_{\gamma}|f_{\gamma}(x)\geq \varepsilon\}=$$ $$\bigcup_{\gamma\in P_{\varepsilon}} V_{\gamma} \cap \{x\in X: |\beta_{\gamma}|f_{\gamma}(x)\geq \varepsilon\} \subset \bigcup_{\gamma\in P_{\varepsilon}} V_{\gamma}.$$ Hence $$\{D_{\varepsilon}\cap V_{\gamma}: \gamma\in P_{\varepsilon}\}$$ is an open cover of the compact set $D_{\varepsilon}.$ The sets $$D_{\varepsilon}\cap V_{\gamma}, \gamma\in P_{\varepsilon}$$ are pairwise disjoint and non-empty, since $$x_{\gamma}\in D_{\varepsilon}\cap V_{\gamma}, \gamma\in P_{\varepsilon}.$$

Thus $P_{\varepsilon}$ is finite for every $\varepsilon>0.$ It follows that $(\beta_{\gamma})\in c_0(\Gamma),$ so $h\in F$.
Thus the subspace $F$ is closed in $C_p(X).$

$(3) \Rightarrow(2)$ is obvious. $(1) \Leftrightarrow (4)$ follows from \cite[Theorem 4.5]{Rosenthal}.

$(1) \Rightarrow(5)$: By \cite[Theorem 4.5]{Rosenthal} the Banach space $C(X)$ contains a non-separable compact set $D$  in the space $C(X)_w$. Since the pointwise topology of $C(X)$ is weaker than the weak topology of $C(X)$ and both topologies coincide on $D$,  the implication follows.

$(5) \Rightarrow(1)$: Assume that $D$ is a compact and non-separable subset of $C_p(X)$. Let $B$ be the closed unit ball in $C(X)$ (which is closed in $C_p(X)$).  Then there exists $m\in\mathbb{N}$ such that $D_m=D\cap mB$ is still non-separable in $C_p(X)$. Clearly $D_m$ is  compact in $C_p(X)$ and  bounded in $C(X)$, so Grothendieck's theorem \cite[Theorem 4.2]{Floret} applies  to derive that $D_m$ is weakly compact in $C(X)$. Again \cite[Theorem 4.5]{Rosenthal} applies to get the item (1).
\end{proof}

\begin{proof}[Proof of Corollary \ref{MAIN2}]
 If $X$ is non-separable, then the family $\{\{x\}: x \; \mbox{is isolated point of}\; X\}$ of pairwise disjoint open subsets of $X$ is uncountable. \end{proof}

A slight modification of the proof of Theorem \ref{MAIN3} and \cite[Remark, p. 227]{Rosenthal} shows also the following
\begin{theorem} Let $X$ be a compact space. Then the space $C_p(X)$ contains a closed copy of $(c_{0}(\Gamma))_p$ for some set $\Gamma$ if and only if there is a family $\{V_{\gamma}: \gamma \in \Gamma\}$ of pairwise disjoint non-empty open subsets of $X$.
\end{theorem}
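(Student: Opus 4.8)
The plan is to recycle almost verbatim the two substantive constructions from the proof of Theorem \ref{MAIN3}, after observing that neither of them actually used the uncountability of the index set, and then to replace the single place where uncountability genuinely entered — the passage from a point-finite family of open sets to a pairwise disjoint one — by its cardinal-preserving counterpart from \cite[Remark, p. 227]{Rosenthal}. We may assume $\Gamma$ is infinite, since for finite $\Gamma$ the space $(c_0(\Gamma))_p$ is just $\mathbb{R}^{|\Gamma|}$, automatically closed in $C_p(X)$, and its presence is equivalent to $\dim C(X)\geq|\Gamma|$, i.e. to $X$ carrying $|\Gamma|$ pairwise disjoint non-empty open sets.

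For the implication ``disjoint family $\Rightarrow$ closed copy'' I would reproduce the argument of $(1)\Rightarrow(3)$ in Theorem \ref{MAIN3} with no change. Starting from pairwise disjoint non-empty open sets $\{V_\gamma:\gamma\in\Gamma\}$ and points $x_\gamma\in V_\gamma$, choose continuous $f_\gamma\colon X\to[0,1]$ with $f_\gamma(x_\gamma)=1$ and $f_\gamma=0$ off $V_\gamma$, and set $g_t=\sum_\gamma t_\gamma f_\gamma$ for $t\in c_0(\Gamma)$. The verifications that each $g_t$ is continuous (every superlevel set $\{|g_t|\geq\varepsilon\}$ is a finite union of closed sets because $\{\gamma:|t_\gamma|\geq\varepsilon\}$ is finite), that $T\colon(c_0(\Gamma))_p\to C_p(X)$, $t\mapsto g_t$, is an isomorphism onto its range $F$, and that $F$ is closed (using that the superlevel sets of a limit function are compact, forcing the coordinate sequence into $c_0(\Gamma)$) are exactly those already carried out there, and none of them invokes $|\Gamma|>\aleph_0$.

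For the converse, let $T\colon(c_0(\Gamma))_p\to C_p(X)$ be an isomorphism onto a closed subspace and put $f_\gamma=Te_\gamma$. Exactly as in $(2)\Rightarrow(1)$, for every $(t_\gamma)\in\mathbb{R}^\Gamma$ one has $t_\gamma e_\gamma\to_\gamma 0$ in $(c_0(\Gamma))_p$ along the cofinite filter, hence $t_\gamma f_\gamma\to_\gamma 0$ in $C_p(X)$; evaluating at a fixed $x$ forces $(f_\gamma(x))_\gamma\in c_{00}(\Gamma)$, so the family $\{U_\gamma:=f_\gamma^{-1}(\mathbb{R}\setminus\{0\}):\gamma\in\Gamma\}$ of non-empty open sets is point-finite. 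Since point-finiteness makes each $\Gamma_W=\{\gamma:U_\gamma=W\}$ finite while $\bigcup_W\Gamma_W=\Gamma$, the number of distinct sets $U_\gamma$ equals $|\Gamma|$, and we obtain a point-finite family of $|\Gamma|$ distinct non-empty open subsets of $X$.

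The main obstacle is the final extraction: we must pass from this point-finite family of cardinality $|\Gamma|$ to a pairwise disjoint family of the \emph{same} cardinality $|\Gamma|$. This is precisely where the argument of Theorem \ref{MAIN3} is insufficient, since \cite[Lemma 4.2]{Rosenthal} only yields an uncountable disjoint family out of an uncountable point-finite one and says nothing about matching the exact cardinal; moreover a naive greedy or maximality argument fails, because in general a single open set may meet $|\Gamma|$ pairwise disjoint members of the family, and without compactness the conclusion is outright false (one can build a point-finite, pairwise intersecting family of any prescribed cardinality in a suitable non-compact space, whose only disjoint subfamilies are singletons). The resolution is to invoke the cardinal-preserving strengthening recorded in \cite[Remark, p. 227]{Rosenthal}, which, for compact $X$, produces from a point-finite family of non-empty open sets of cardinality $\kappa$ a pairwise disjoint subfamily of cardinality $\kappa$; applying it with $\kappa=|\Gamma|$ yields the desired $\{V_\gamma:\gamma\in\Gamma\}$ and completes the proof.
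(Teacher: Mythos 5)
Your skeleton is exactly the paper's: the paper's entire proof consists of observing that the two constructions in the proof of Theorem \ref{MAIN3} never use uncountability of $\Gamma$, and of delegating the remaining combinatorial step to \cite[Remark, p.~227]{Rosenthal}; your recycling of the implications $(1)\Rightarrow(3)$ and $(2)\Rightarrow(1)$ is accurate (including the finite-$\Gamma$ reduction and the cofinite-filter argument). The genuine flaw is in how you formulate the delegated step: you assert that, for compact $X$, a point-finite family of $\kappa$ non-empty open sets admits a pairwise disjoint \emph{subfamily} of cardinality $\kappa$. No remark can record this, because it is false even for compact $X$ and $\kappa=\aleph_1$: in the one-point compactification of the discrete space $D=[\omega_1]^2$, the sets $U_\alpha=\{p\in D:\alpha\in p\}$, $\alpha<\omega_1$, are non-empty and open, every point lies in at most two of them, yet any two of them meet (in the point $\{\alpha,\beta\}$), so every disjoint subfamily is a singleton. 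What the theorem needs --- note that its statement only asks for \emph{some} disjoint family $\{V_\gamma\}$, with no relation to the $U_\gamma$'s --- is a disjoint \emph{refinement} of cardinality $\kappa$, typically built from finite intersections of the $U_\alpha$'s; that is the only form in which Rosenthal's remark can be invoked. With ``subfamily'' replaced by ``family'', your argument becomes the paper's.

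Your explanation of why compactness enters is also incorrect: your non-compact ``counterexample'' (a point-finite, pairwise intersecting family) only obstructs disjoint subfamilies, which, as above, are equally obstructed in compact spaces, and in such examples disjoint families of the same cardinality exist trivially (e.g.\ singletons when the space is essentially discrete). Compactness --- really, the Baire property --- does matter, but the correct witness is, e.g., the Pixley--Roy space over $\mathbb{R}$: it satisfies the countable chain condition, yet $\{[\{x\},\mathbb{R}]:x\in\mathbb{R}\}$ is a point-finite family of continuum many distinct non-empty open sets (a finite set $F$ belongs to exactly $|F|$ of them), so for general Tychonoff spaces point-finiteness yields no uncountable disjoint family at all. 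For compact $X$ the needed refinement does exist: writing $\mathrm{ord}(x)=|\{\alpha: x\in U_\alpha\}|$, the space $X=\bigcup_n\{x:\mathrm{ord}(x)\le n\}$ is a countable union of closed sets, so by Baire category (for $\kappa$ regular) $\kappa$ many of the $U_\alpha$ meet the interior of a fixed set $\{x:\mathrm{ord}(x)\le N\}$, and for families of order at most $N$ a disjoint refinement of the same cardinality follows by induction on $N$; singular $\kappa$ is then handled by the Erd\H{o}s--Tarski theorem on attainment of cellularity. In short: same route as the paper, both constructions correctly reused, but the key cited statement is misquoted in a form that is false, and the accompanying discussion of the role of compactness is wrong.
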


\section{Proof of Theorem \ref{co} and Corollary \ref{coo}}

\begin{proof}[Proof of Theorem \ref{co}]

We can suppose that $ \varepsilon < 1 $. Let us assume first that $ E $ has infinite codimension. Let $$ A_{0} \subset A_{1} \subset A_{2} \subset \dots $$ be subspaces of $ c_{0}^{*} $ such that $ A_{n} $ has dimension $ n $ and $ \bigcup_{n=1}^{\infty} A_{n} $ is dense in the annihilator $$ E^{\bot} = \{ x^{*} \in c_{0}^{*} : (\forall x \in E)(x^{*}(x) = 0) \}.$$  For each $ n \in \mathbb{N} $, we choose $ a_{n} \in c_{0} $ such that $ u^{*}(a_{n}) = 0 $ for $ u^{*} \in A_{n-1} $ and $ u^{*}(a_{n}) \neq 0 $ for some (equivalently any) $ u^{*} \in A_{n} \setminus A_{n-1} $. Also, for each $ n \in \mathbb{N} $, we choose $ u^{*}_{n} \in A_{n} $ with $ u^{*}_{n} \neq 0 $ such that $ u^{*}_{n}(a_{m}) = 0 $ for $ 1 \leq m \leq n - 1 $ (the intersection of $ n - 1 $ hyperplanes in an $ n $-dimensional space has dimension at least $ 1 $). Necessarily, $ u^{*}_{n} \notin A_{n-1} $, since once $ u^{*}_{n} \in A_{n-1} $, then $ u^{*}_{n} \in A_{n-2} $ (as $ u^{*}_{n}(a_{n-1}) = 0 $), $ u^{*}_{n} \in A_{n-3} $ (as $ u^{*}_{n}(a_{n-2}) = 0 $), etc., and finally $ u^{*}_{n} \in A_{0} = \{ 0\} $. It follows that $ u^{*}_{n}(a_{n}) \neq 0 $.

So, we have found a sequence $ a_{n} $ in $ c_{0} $ and a sequence $ u^{*}_{n} $ in $ c_{0}^{*} $ such that $ u^{*}_{n}(a_{m}) \neq 0 $ if and only if  $ n = m $, and
$$ E = \Big( \bigcup_{n=1}^{\infty} A_{n} \Big)_{\bot} = \bigcap_{n=1}^{\infty} \{ x \in c_{0} : u^{*}_{n}(x) = 0 \}. $$ Let us choose numbers $ \varepsilon_{n} > 0 $ such that
$$ \sum_{n=1}^{\infty} \varepsilon_{n} \leq \frac{1}{2} \varepsilon \quad \textrm{and} \quad \prod_{n=1}^{\infty} (1+\varepsilon_{n}) \leq 2. $$
Next, we construct recursively for $ n = 1, 2, \dots $:
\begin{itemize}
\item $ S_{0} = I_{c_{0}} $,

\bigskip

\item $ b_{n} = S_{n-1}(a_{n}) $,

\bigskip

\item $ v^{*}_{n} $ continuous with respect to the pointwise topology on $ c_{0} $ chosen such that $ \Vert v^{*}_{n} - u^{*}_{n} \Vert < \frac{\varepsilon_{n}}{\Vert b_{n} \Vert} |v^{*}_{n}(b_{n})| $ (we explain later that such choice is possible),

\bigskip

\item $ T_{n}(x) = x + (u^{*}_{n}(x) - v^{*}_{n}(x)) \frac{1}{v^{*}_{n}(b_{n})} b_{n} $ for $ x \in c_{0} $,

\bigskip

\item $ S_{n} = T_{n} \circ T_{n-1} \circ \dots \circ T_{1} $.
\end{itemize}
We prove by induction that
\begin{itemize}
\item[{(i)}] $ u^{*}_{n}(b_{n}) = u^{*}_{n}(a_{n}) \neq 0 $, $ v^{*}_{m}(b_{n}) = 0 $ for $ 1 \leq m \leq n - 1 $ and $ u^{*}_{m}(b_{n}) = 0 $ for $ m \geq n + 1 $,

\bigskip

\item[{(ii)}] the choice of $ v^{*}_{n} $ is possible,

\bigskip

\item[{(iii)}] $ \Vert T_{n} - I_{c_{0}} \Vert \leq \varepsilon_{n} $, and so $ \Vert T_{n} \Vert \leq 1 + \varepsilon_{n} $,

\bigskip

\item[{(iv)}] $ \Vert S_{n} \Vert \leq 2 $ and $ \Vert S_{n} - S_{n-1} \Vert \leq 2\varepsilon_{n} $,

\bigskip

\item[{(v)}] $ v^{*}_{n}(T_{n}(x)) = u^{*}_{n}(x) $, $ v^{*}_{m}(T_{n}(x)) = v^{*}_{m}(x) $ for $ 1 \leq m \leq n - 1 $ and $ u^{*}_{m}(T_{n}(x)) = u^{*}_{m}(x) $ for $ m \geq n + 1 $,

\bigskip

\item[{(vi)}] $ v^{*}_{m}(S_{n}(x)) = u^{*}_{m}(x) $ for $ 1 \leq m \leq n $ and $ u^{*}_{m}(S_{n}(x)) = u^{*}_{m}(x) $ for $ m \geq n + 1 $.

\bigskip

\end{itemize}
Concerning the first step $ n = 1 $, we just note that (i) holds since $ b_{1} = a_{1} $. The other properties can be proved in the same way as in the induction step $ n - 1 \to n $. Let us assume that $ n \in \mathbb{N} $ and that the properties are shown for $ 1 \leq m \leq n - 1 $. We show that they hold also for $ n $.

(i) We use property (vi) for $ n - 1 $ as follows. For $ 1 \leq m \leq n - 1 $, we compute $$ v^{*}_{m}(b_{n}) = v^{*}_{m}(S_{n-1}(a_{n})) = u^{*}_{m}(a_{n}) = 0.$$  For $ m \geq n $, we compute $$ u^{*}_{m}(b_{n}) = u^{*}_{m}(S_{n-1}(a_{n})) = u^{*}_{m}(a_{n}),$$ that is non-zero for $ m = n $ and zero for $ m \geq n + 1 $.

(ii) Once we know that $ u^{*}_{n}(b_{n}) \neq 0 $, it is sufficient to note that pointwise continuous functionals are norm-dense in $ c_{0}^{*} $ and $$ \Big\{ v^{*} \in c_{0}^{*} : \Vert v^{*} - u^{*}_{n} \Vert < \frac{\varepsilon_{n}}{\Vert b_{n} \Vert} |v^{*}(b_{n})| \Big\} $$ is an open set containing $ u^{*}_{n} $.

(iii) We have $$\Vert (T_{n} - I_{c_{0}})(x) \Vert = \Big\Vert (u^{*}_{n}(x) - v^{*}_{n}(x)) \frac{1}{v^{*}_{n}(b_{n})} b_{n} \Big\Vert $$  $$ \leq \Vert u^{*}_{n} - v^{*}_{n} \Vert \Vert x \Vert \frac{\Vert b_{n} \Vert}{|v^{*}_{n}(b_{n})|} \leq \varepsilon_{n} \Vert x \Vert $$ for each $ x \in c_{0}.$

(iv) Using (iii), we can compute $$ \Vert S_{n} \Vert \leq (1+\varepsilon_{n}) (1+\varepsilon_{n-1}) \dots (1+\varepsilon_{1}) \leq 2.$$  Thus we derive $$ \Vert S_{n} - S_{n-1} \Vert = \Vert (T_{n} - I_{c_{0}}) \circ S_{n-1} \Vert \leq 2\varepsilon_{n}.$$

(v) Applying  (i) we deduce that  $$ v^{*}_{n}(T_{n}(x)) = v^{*}_{n}(x) + (u^{*}_{n}(x) - v^{*}_{n}(x)) \frac{1}{v^{*}_{n}(b_{n})} v^{*}_{n}(b_{n}) =$$ $$ v^{*}_{n}(x) + (u^{*}_{n}(x) - v^{*}_{n}(x)) = u^{*}_{n}(x).$$  For  $1 \leq m \leq n - 1 $, we write $$ v^{*}_{m}(T_{n}(x)) = v^{*}_{m}(x) + (u^{*}_{n}(x) - v^{*}_{n}(x)) \frac{1}{v^{*}_{n}(b_{n})} v^{*}_{m}(b_{n}) = v^{*}_{m}(x) + 0.$$  For $ m \geq n + 1 $, we have $$ u^{*}_{m}(T_{n}(x)) = u^{*}_{m}(x) + (u^{*}_{n}(x) - v^{*}_{n}(x)) \frac{1}{v^{*}_{n}(b_{n})} u^{*}_{m}(b_{n}) = u^{*}_{m}(x) + 0.$$

(vi) We use property (v) and the induction hypothesis as follows: We have $$ v^{*}_{n}(S_{n}(x)) = v^{*}_{n}(T_{n}(S_{n-1}(x))) = u^{*}_{n}(S_{n-1}(x)) = u^{*}_{n}(x).$$  For $ 1 \leq m \leq n - 1 $, we  obtain   $$v^{*}_{m}(S_{n}(x)) = v^{*}_{m}(T_{n}(S_{n-1}(x))) = v^{*}_{m}(S_{n-1}(x)) = u^{*}_{m}(x).$$ Finally, for $ m \geq n + 1 $, we compute $$ u^{*}_{m}(S_{n}(x)) = u^{*}_{m}(T_{n}(S_{n-1}(x))) = u^{*}_{m}(S_{n-1}(x)) = u^{*}_{m}(x). $$

Now, we see from (iv) that $ (S_{n})_n $ is a Cauchy sequence in $ \mathcal{L}(c_{0}) $, having a limit $ S $ with $$ \Vert S - I_{c_{0}} \Vert = \Vert S - S_{0} \Vert \leq \sum_{n=1}^{\infty} 2\varepsilon_{n} \leq \varepsilon.$$ Since we assumed  that $ \varepsilon < 1 $, we obtain that $ S $ is invertible. Also,
$$ v^{*}_{m}(S(x)) = u^{*}_{m}(x), \quad m \in \mathbb{N}, \, x \in c_{0}, $$
as we have $ v^{*}_{m}(S_{n}(x)) = u^{*}_{m}(x) $ for each $ n \geq m $ due to (vi). We finally deduce that
$$ S(E) = \bigcap_{n=1}^{\infty} \{ x \in c_{0} : v^{*}_{n}(x) = 0 \}, $$
which shows that $ S(E) $ is closed in the pointwise topology.

So, we proved our claim  for the case that  $ E $ has infinite codimension. In the opposite case, we can use the same method, with the difference that the sequence $ A_{0} \subset A_{1} \subset A_{2} \subset \dots $ stops after finitely many steps, and the last isomorphism $ S_{n} $ from our recursion works.
\end{proof}

\begin{proof}[Proof of Corollary \ref{coo}]
Let $E$ be a closed infinite-dimensional subspace of the Banach space $c_0$ which is not isomorphic to $c_0$, see \cite[p.~73]{Lindenstrauss}. Let $T(E)$ be the subspace of $c_0$ which is closed in the  pointwise topology of $c_0$ as mentioned in Theorem \ref{co}.   Denote by $T(E)_p$ this space endowed with the pointwise topology. We show that $T(E)_p$ is not isomorphic to $(c_0)_p$. Assume (by contradiction) that there exists an isomorphism $Q: T(E)_p\rightarrow (c_0)_p$. Then, applying the closed graph theorem between Banach spaces $T(E)$ and $c_0$, we derive that $Q: T(E)\rightarrow c_0$ is an isomorphism, which provides a contradiction.
\end{proof}

\end{document}